\newtheorem{thm}{Theorem}[section]
\newtheorem{exm}[thm]{Example}
\newtheorem{lem}[thm]{Lemma}
\newtheorem{prop}[thm]{Proposition}
\newtheorem{cor}[thm]{Corollary}
\newcommand{\abs}[1]{\left\vert#1\right\vert}
\newcommand{\conv}{\texttt{Conv}}
\newcommand{\diag}{\texttt{diag}}
\newcommand{\norm}[1]{\parallel\! #1\! \parallel}
\newcommand{\rank}{\texttt{rank}}
\newcommand{\seq}[1]{\left<#1\right>}
\newcommand{\set}[1]{\left\{#1\right\}}
\newcommand{\sgn}{\texttt{sgn}}
\newcommand{\tr}{\texttt{Tr}}
\newcommand{\al}{\alpha}
\newcommand{\be}{\beta}
\newcommand{\ga}{\gamma}
\newcommand{\Gam}{\Gamma}
\newcommand{\la}{\lambda}
\newcommand{\p}{\prime}
\newcommand{\sig}{\sigma}
\newcommand{\tea}{\theta}
\newcommand{\A}{\mathcal{A}}
\newcommand{\B}{\mathcal{B}}
\newcommand{\C}{\mathcal{C}}
\newcommand{\X}{\mathcal{X}}
\newcommand{\Y}{\mathcal{Y}}
\newcommand{\J}{\mathcal{J}}
\newcommand{\cH}{\mathcal{H}}
\newcommand{\cP}{\mathcal{P}}
\newcommand{\cK}{\mathcal{K}}
\newcommand{\caL}{\mathcal{L}}
\newcommand{\T}{\mathcal{T}}
\newcommand{\bfa}{\textbf{a}}
\newcommand{\bfb}{\textbf{b}}
\newcommand{\bfc}{\textbf{c}}
\newcommand{\bv}{\textbf{v}}
\newcommand{\bx}{\textbf{x}}
\newcommand{\pbx}{\textbf{x}^{\p}}
\newcommand{\by}{\textbf{y}}
\newcommand{\bz}{\textbf{z}}
\newcommand{\PP}{\mathbb{P}}
\newcommand{\R}{\mathbb{R}}
\newcommand{\CC}{\mathbb{C}}
\newcommand{\II}{\mathbb{I}}
\newcommand{\mnrt}{$m$th order $n$-dimensional real tensor }
\newcommand{\mnrts}{$m$th order $n$-dimensional real tensors }
\newcommand{\beq}{\begin{equation}}
\newcommand{\eeq}{\end{equation}}
\newcommand{\bey}{\begin{eqnarray}}
\newcommand{\eey}{\end{eqnarray}}
\newcommand{\beyy}{\begin{eqnarray*}}
\newcommand{\eeyy}{\end{eqnarray*}}
\begin{document}
\begin{frontmatter}
% -------------------------------------------------------------------
%--------------------------------------------------------
\title{ Grassmann Tensors and their Applications in Mutliview Geometry
\footnote{The first two authors contributed  equally to this paper.
This work is supported by National Natural Science Foundation of China under Grant No. 62101400. }}

\author[add1]{Changqing Xu}
\ead{cqxurichard@usts.edu.cn}
\author[add2]{Kaijie Xu}
\ead{kaijiexu@mail.xidian.edu.cn}
\author[add1]{Jun Wang}
\ead{2844832029@qq.com}
\author[add1]{Jingxuan Bai}
\ead{954686364@qq.com}
\address[add1]{School of Mathematical Sciences, Suzhou University of Science and Technology, Suzhou,China}
\address[add2]{School of Electronic Engineering, Xidian University, Xi'an, China}
%\maketitle

\begin{abstract}
In this paper, we introduce the Grassmann tensor by tensor product of vectors and some basic terminology in tensor theory. Some basic properties of the Grassmann 
tensors are investigated and the tensor language is used to rewrite some relations and correspondences in the mutliview geometry. Finally we show that a polytope in 
the Euclidean space $\R^{n}$ can also be concisely expressed as the Grassmann tensor generated by its vertices.  
\end{abstract}
\begin{keyword}  Anti-symmetric tensor; Grassmann tensor; multiview geometry; polytope; tensor product.  \\ 
\textbf {AMS Subject Classification}: \   53A45, 15A69.    
 \end{keyword}

\end{frontmatter}

\section{Introduction}
\setcounter{equation}{0}

Tensors play a very important role in deep learning and machine learning\cite{SLFH2017}. They are at the heart of algorithms such as convolutional neural networks (CNNs) and recurrent neural networks (RNNs) which are widely used in many fields e.g. image recognition\cite{BS2020}, natural language processing (NLP)\cite{CMPC2017}, and 
time series analysis (TSA)\cite{RLR2013}. \\  
\indent A tensor is a multi-dimensional array of numerical values used to describe high dimensional data such as the physical state or properties of a material in physics and mechanics \cite{KS1975, KS1977, Hiki1981}.  In image analysis, we usually use a matrix to capture the intensity of light along with the spatial coordinates of the image to illustrate a grayscale image, and a third-order tensor to represent a batch of grayscale images with two spatial modes and one mode indexing different images in the batch. A color image can be algebraically described by a 3-order tensor identical to a triple of matrices $(R,G,B)$, each representing the color image in one channel. In this way, a color video can 
be represented by a fourth-order tensor $\A=(A_{ijkl})\in\R^{m\times n\times 3\times p}$ where the discretized video flow is sampled into a sequence of $p$ frames with each 
a color image of size $m\times n$. In general, the more indices a tensor has, the more degrees of freedom it represents. Thus tensors can compactly represent very high-dimensional data.  Tensors also make many operations easier to perform than matrices. For example, they are easily manipulated using automatic differentiation software like TensorFlow and ByTorch, which can automatically compute the gradient of a tensor with respect to any other tensor. \\
\indent  Deep learning models are composed of layers each of which takes some input data and produces the output after some transformations. There are many types of layers, including convolutional layers, pooling layers and recurrent layers, which are fully connected and parameterized by tensors. Tensors are also used to represent the weights of neural networks. A weight tensor is simply a tensor that is used as a parameter in a layer. When we train a neural network, we need to optimize the values of these weight tensors to minimize some loss functions.\\
\indent Tensors have already found tremendous applications in image analysis and signal processing\cite{AGTL2009, BBT2009,MSL2013,SHSW2019}, facial recognitions\cite{HKB2006} and computer vision\cite{HV2008, Hey1998, Hey2000, BBT2009}, including object and motion recognition and video understanding since 1997\cite{Quan1997}.  Quan and Kanade\cite{QK1997} and 
Faugeras et al. \cite{Faug2000} and Quan \cite{Quan2001} studied projections from $\PP^{2}$ to $\PP^{1}$, solved the reconstruction problem by trifocal tensors and 
presented two possible reconstructions in the case.  Faugeras et al. \cite{Faug2000} investigated the self-calibration of a camera undergoing motion in a plane. 
Hartley and Vidal \cite{HV2008} use tensors to compute non-rigid structure and motion under perspective projection.  The bifocal, trifocal and quadrifocal tensor have been established to reconstruct a 3D scene from its projection respectively in two, three or four images (\cite{HZ2004, Hart1998a, Hey1998, HS2009, HKB2006}).\\
\indent  There are some already known research work to reconstruct the scene points by tensors\cite{Hey2000}, including the trifocal
tensors\cite{AO2014,AST2013,AT2009,AT2010} and the quadrifocal tensors\cite{SW2000}.  Wolf and Shashua\cite{WS2002} 
investigated the projections from $\PP^{n}$ to $\PP^{2}$ to  analyze several different problems in dynamic scene, but no general way of defining such tensors. Hartley and 
Schaffalitzky\cite{HS2009} initialized the Grassmann tensors for the unification of different form of the view tensors and the algorithms for estimating the projection matrices. They show that the projection matrices can be determined uniquely by a Grassmann tensor up to projective equivalence. However, the computations involved to extract the projections and the reconstruction are still limited to matrix techniques. \\
\indent In this paper, we introduce the Grassmann (Pl\"{u}cker) tensor by the anti-symmetric operation on the tensor product of vectors, also introduced are some basic 
terminology and notations of tensor theory. We investigate properties of Grassmann tensor and then use Grassmann tensors to simplify some relations and correspondences 
in mutliview geometry. We show in the end of the paper that a polytope in Euclidean space $\R^{n}$ can be concisely expressed as the Grassmann tensor generated by its
vertices. \\
\indent Recall that a tensor is a multi-way array which is also called a hypermatrix.  A matrix is a 2-order tensor with two modes: row and column, and an $m$-order tensor 
has $m$ modes with entries addressed by $m$ indices. A tensor can also be regarded as a multilinear mapping. For more detail in tensor theory we refer the reader to 
\cite{QCC2018}.\\
\indent  We denote by $[m]$ the set $\set{1,2,\ldots,m}$ for any positive integer $m$ and by $\cP_{m}$ the set of permutations on $[m]$.  Throughout the paper, we use 
the tensorial notation, i.e., tensors of order 0 (scalars) are denoted by means of italic type letters $a,b,x,y$ and some Greek letters $\la,\mu$ etc., tensors of order 1 (vectors) 
by means of boldface italic letters $\bx,\by, \bz$ and Greek letters $\al, \be, \ga$, tensors of order two (matrices) by capital boldface letters $A, B, X,Y, M$, and tensors of higher orders by curlicue letters $\A,\B, \X,\Y, \cdots $.  An $m$-order tensor $\A$ is of size $\II_{m}:=I_{1}\times\ldots \times I_{m}$ if the dimension of the $k$-mode of $\A$  is $I_{k}$
for $k\in[m]$. Denote by $\T_{\II}$ for the set of all real tensors of size $\II_{m}$.  An $m$-order tensor $\A$ is called an \mnrt  if $I_1=I_2=\ldots=I_m:=n$ for some positive integer $m$.  The set of all \mnrts is denoted by $\T_{m,n}$.  For our convenience, we denote  
\[ S(m,n) = \set{(i_1,i_2,\ldots, i_m): i_k\in [n], \forall k\in [m] } \]
which usually serves as the index set of a tensor in $\T_{m;n}$.  An \mnrt  $\A\in \T_{m;n}$ is an $m$-array whose entries are indexed by $\sig:=(i_1,i_2,\ldots, i_m)\in S(m, n)$.  
We sometimes denote $A_{i_1i_2\ldots i_m}$ by $A_{\sig}$ for $\sig=(i_1,\ldots, i_m)$.  A tensor $\A\in\T_{m;n}$ is called \emph{symmetric} if each of its entries $A_{\sig}$ is 
invariant under any permutation of its indices, i.e., 
\[ A_{\sig}=A_{\tau(\sig)} \quad  \forall  \tau\in \cP_m, \sig\in S (m,n). \]
A symmetric tensor $\A\in\T_{m;n}$ is uniquely associated with an $m$-order $n$-variate homogeneous polynomial 
\beq\label{eq1-1}
f_{\A}(\bx)=\A\textbf{x}^m=\sum\limits_{i_1,i_2,...,i_m}A_{i_1,i_2,...,i_m}x_{i_1}x_{i_2}\ldots x_{i_m}
\eeq
$\A$ is called \emph{positive semidefinite} (\emph{positive definite}) if $f_{\A}(\bx)=\A\bx^m > 0$($\ge 0$) for all nonzero vector $\bx\in\R^n$. \\
\indent  We need some preparations on the multiplications of tensors and the related terminology and notations before we introduce the Grassmann tensor. There are mainly three kind of multiplications of tensors:  the outer product (also called tensor product), contractive (mode) product, and the t-product defined on 3-order tensors. We will introduce the first two in this paper and generalize the outer-product of two tensors. For the t-product of two 3-order tensors, we refer the reader to \cite{Lu2018, LFCL2018, TM2018}. \\  
\indent Let $p, q$ be positive integers and $\A\in\T_{p;n}, \B\in\T_{q;n}$. Let $\tea:=\set{\tea_{1}, \tea_{2},\ldots, \tea_{p}}$ be a nonempty subset of $[m]$ with complement 
$\tea^{c}:=\set{\phi_{1},\phi_{2},\ldots,\phi_{q}}$, both ordered increasingly. The \emph{outer product} of  tensors $\A,\B$ along $(\tea,\tea^{c})$, which is denoted by 
$\A\times_{(\tea,\tea^{c})} \B$ or simply $\A\times_{\tea} \B$, is defined as  
\beq\label{eq1-2}
\C=\A\times_{\tea}\B = (C_{\sig}), \mbox{with} ~ C_{\sig}=A_{\tea_{1}\tea_{2}\ldots\tea_{p}}B_{\phi_{1}\phi_{2}\ldots\phi_{q}}
\eeq
where $m:=p+q, \sig=(i_1,i_2,\ldots, i_m)$.  For $\tea:=[p]$, we denote $\A\times_{\tea} \B$ simply by $\A\times \B$.\\
\indent  The outer product of tensors satisfies the law of the association, i.e., 
\beq\label{eq1-3}
(\A\times \B)\times \C =\A\times (\B\times \C)
\eeq 
for any tensors $\A,\B$ and $\C$, and thus applies to any number of tensors.  A special case is when all tensors involved are vectors, i.e.,   
\beq\label{eq:rk1t}
\X = \bx^{(1)}\times \bx^{(2)}\times\cdots\times \bx^{(m)}, \quad \bx^{(k)}\in\C^{n}.
\eeq
$\X$ is a rank-one \mnrt  and is denoted $\bx^{m}$ when all $\bx^{(k)}$ are identical (to $\bx\in\C^{n}$).  It is shown \cite{CGLM2006} that every \mnrt  can be written as 
the sum of some rank-one tensors.\\
\indent  Given a set $S$ with cardinality $\abs{S}=n$ and a positive number $q\in [n]$. We denote by $S^{[q]}$ the set of all $q$-sets of $S$, i.e., 
$S^{[q]} = \set{ T\subset S\colon  \abs{T}=q  }$.  For example, if $S=[4]=\set{1,2,3,4}$ and $q=2$, we have 
\[ [4]^{[2]} =  \set{\set{1,2},\set{1,3},\set{1,4},\set{2,3},\set{2,4},\set{3,4}}. \] 
 
\begin{exm}\label{exm2-1}
The outer product of two matrices is another special case of (\ref{eq1-2}).  Let $A\in\C^{m_{1}\times n_{1}},B\in\C^{m_{2}\times n_{2}}$, and let 
$\tea := \{s, t\} \subset [4], s<t, \{p, q\} =\tea^c, p < q$.  Then $A \times_{\tea} B$ is a 4-order tensor satisfying
\beq\label{eq1-5}
(\A\times_{\tea}B)_{i_1i_2i_3i_4}=A_{i_s i_t}B_{i_p i_q}
\eeq
There are six outer-products corresponding resp. to six elements in $[4]^{[2]}$, i.e., $A\times_{\tea} B$ with $\tea\in\set{\set{1,2},\set{1,3},\set{1,4},\set{2,3},\set{2,4},\set{3,4}}$, 
When $A=B\in\R^{n\times n}$, we have $A\times_{\tea} A = A\times_{\tea^{c}} A$.  
\end{exm} 

\indent For any matrix $A\in\R^{p\times q}$, we can also generate different kind of $2k$-order tensor $A^{[k]}:=\overbrace{A\times A\times \cdots \times A}^{k}$. For example, 
we can define $\A=(A_{i_{1}\ldots i_{k};j_{1}\ldots j_{k}})$ as 
\beq\label{eq1-6}  
A_{i_{1}i_{2}\ldots i_{k}; j_{1}j_{2}\ldots j_{k}} =  \prod\limits_{s=1}^{k} A_{i_{s}j_{s}}  
\eeq
which is a \emph{paired symmetric tensor} (e.g. \cite{HQ2018}) since for any permutation $\al\in\cP_{3}$ we have 
\[
A_{i_{1}i_{2}\ldots i_{k}; j_{1}j_{2}\ldots j_{k}} = A_{i_{\al(1)}i_{\al(2)}\ldots i_{\al(k)}; j_{\al(1)}j_{\al(2)}\ldots j_{\al(k)}} 
\] 
\indent  Given any $2k$-order tensor $\A=(A_{i_{1}\ldots i_{k};j_{1}\ldots j_{k}})$ and an $k$-order tensor $\B$. The contractive product of $\A$ with $\B$, denoted $\A\B$, is defined as 
\beq\label{eq1-7}  
(\A\B)_{i_{1}i_{2}\ldots i_{k}} = \sum\limits_{j_{1},j_{2},\ldots,j_{k}} A_{i_{1}i_{2}\ldots i_{k};j_{1}j_{2}\ldots j_{k}} B_{j_{1}j_{2}\ldots j_{k}} 
\eeq
\indent  Given a tensor $\A\in \T_{m;n}$ and matrix $B\in\R^{n\times p}$. The contractive product of $\A$ by $B$ along the $k$-mode, denoted $\A\times_{(k)} B$ ($\forall k\in [m]$), is defined by
\beq\label{eq1-8}
(\A\times_{(k)} B)_{i_1\ldots i_{k-1} i_{k} i_{k+1}\ldots i_n} = \sum_{j=1}^n  A_{i_1,\ldots i_{k-1} j i_{k+1}\ldots i_n}B_{j i_k}
\eeq
We write $\A B$ for $\A\times_{(m)} B$. Note that it reduces to a matrix product for $m=2$, i.e., $\A\times_{(2)} B=AB, A\times_{(1)}B=A^{\top}B$.\\   
\indent  The contractive product along one mode can be generalized to two modes resulting in an $(m-2)$th order tensor, and it can preserve or compress (other than expand as a sequence of outer product) the tensors and is useful in many aspects. For example, the homogeneous polynomial $f(\bx):=\A\bx^m$ defined by (\ref{eq1-1}) can be regarded as the contractive product of $\A$ with the rank-one tensor $\bx^{m}$, and $\A\bx^{m-1}$ is employed to define various eigen-pairs (see e.g. \cite{QCC2018}). \\ 

\indent Now we let $\bv_{1},\bv_{2},\cdots, \bv_{m}\in \CC^{n}$ with each $\bv_{j}\neq 0$, and define the linear map
\beq\label{eq1-9}  
\caL :=   \sum\limits_{\sig\in \cP_{m}} (-1)^{\tau(\sig)} \sig 
\eeq
on $\T_{m;n}$ as  
\beq\label{eq1-10} 
\bv_{1}\wedge\cdots\wedge \bv_{m} =\caL(\bv_{1}\times \bv_{2}\times \cdots\times \bv_{m})  
\eeq
We call the tensor $\bv_{1}\wedge\cdots\wedge \bv_{m}$ defined by (\ref{eq1-10}) the \emph{Grassmann} (or \emph{Pl{\"u}cker}) tensor associated with 
$\set{\bv_{k}}_{k=1}^{m}$ and denote it by $\cP[\bv_{1},\ldots,\bv_{m}]$.  Note that $\caL^{2}=\caL$ and $\cP[\bv_{1},\ldots,\bv_{m}]\neq 0$ if and 
only if $\bv_{1},\cdots,\bv_{m}$ are linearly independent \cite{XX2022}.  For $m=2$, $\cP[\bv_{1},\bv_{2}]\in\R^{n\times n}$ ($n\ge 2$) is an anti-symmetric 
matrix of rank 2 when $\bv_{1}, \bv_{2}\in \R^{n}$ are linearly independent. \\
\indent  It is known\cite{XX2022} that a Grassmann tensor is an anti-symmetric tensor. Hartley and Schaffalitzky\cite{HS2009} first introduced the Grassmann tensors in 
the context of multiview geometry and used Grassmann tensors to extend the fundamental matrices (bifocal tensors) to higher order multi-focal tensors e.g. trifocus tensor 
and quadrifocus tensor to establish the relationships between sets of corresponding subspaces in various views.\\

\vskip 5pt

\section{ Grassmann tensors and their properties}
\setcounter{equation}{0}
 
Let $A=[\bfa^{1},\bfa^{2},\ldots, \bfa^{m}]\in \R^{n\times m}$ be a matrix with $\bfa^{1},\ldots, \bfa^{m}\in\R^{n}$ being its columns and $m\le n$.  
Denote $\cP:=\bfa^{1}\wedge\cdots\wedge\bfa^{m}$.  Then $\cP\in\T_{m;n}$ is a \mnrt.  We have 
\begin{lem}\label{le2-1}
$\cP\bx^{m}\equiv 0$ for all $\bx\in\R^{n}$.
\end{lem}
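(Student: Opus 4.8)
The plan is to play the anti-symmetry of the Grassmann tensor $\cP$, recalled just above the statement, against the total symmetry of the monomial $x_{i_{1}}\cdots x_{i_{m}}$ that appears in the contraction. Writing the contraction out according to the convention (\ref{eq1-1}),
\[ \cP\bx^{m}=\sum_{i_{1},\ldots,i_{m}} P_{i_{1}\ldots i_{m}}\, x_{i_{1}}\cdots x_{i_{m}}, \]
the point I want to exploit is that an anti-symmetric coefficient array $P$ cannot survive once it is summed against a fully symmetric weight.

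Concretely, I would first fix any transposition $\pi=(k\,l)\in\cP_{m}$ (which exists since $m\ge 2$) and relabel the summation variables by $\pi$. Because the index set $S(m,n)$ is invariant under relabeling and the product $x_{i_{1}}\cdots x_{i_{m}}$ is unchanged when its factors are permuted, this yields $\cP\bx^{m}=\sum_{i_{1},\ldots,i_{m}} P_{i_{\pi(1)}\ldots i_{\pi(m)}}\, x_{i_{1}}\cdots x_{i_{m}}$. I would then invoke anti-symmetry in the form $P_{i_{\pi(1)}\ldots i_{\pi(m)}}=(-1)^{\tau(\pi)}P_{i_{1}\ldots i_{m}}=-P_{i_{1}\ldots i_{m}}$, valid because $\pi$ is a transposition, to obtain $\cP\bx^{m}=-\cP\bx^{m}$. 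Over $\R$ this forces $\cP\bx^{m}=0$, and since $\bx$ is arbitrary the identity $\cP\bx^{m}\equiv 0$ follows.

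The only thing needing care — and what I regard as the sole (minor) obstacle — is the bookkeeping of the permutation action: one must apply the anti-symmetry relation with the sign convention $(-1)^{\tau(\cdot)}$ matching the definition of $\caL$ in (\ref{eq1-9}), and check that relabeling the summation variables is legitimate (it is, as summation over all of $S(m,n)$ is permutation-invariant). Alternatively, I would give a self-contained route that does not quote the anti-symmetry fact: unwinding $\caL$ in (\ref{eq1-9})--(\ref{eq1-10}) shows that $P_{i_{1}\ldots i_{m}}$ is exactly the determinant of the $m\times m$ submatrix of $A$ formed by rows $i_{1},\ldots,i_{m}$. Substituting this into the contraction and using multilinearity of the determinant in its rows, the sum collapses to the determinant of an $m\times m$ matrix each of whose rows equals $\bx^{\top}A$; having $m\ge 2$ identical rows, this determinant vanishes. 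This second argument reproves the anti-symmetry of $\cP$ as a byproduct.
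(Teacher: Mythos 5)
Your proof is correct, and your first argument takes a genuinely different route from the paper's. The paper works directly from the decomposable form: it expands $\cP=\sum_{\sig}\sgn(\sig)\,\bfa^{\sig(1)}\times\cdots\times\bfa^{\sig(m)}$, contracts each rank-one term with $\bx^{m}$ to get the \emph{same} scalar $\prod_{k=1}^{m}(\bx^{\top}\bfa^{k})$ for every $\sig$, and concludes from $\sum_{\sig\in\cP_{m}}\sgn(\sig)=0$. Your transposition-relabeling argument instead plays the entrywise anti-symmetry of $\cP$ against the symmetry of the monomial $x_{i_{1}}\cdots x_{i_{m}}$ to force $\cP\bx^{m}=-\cP\bx^{m}$; this never uses that $\cP$ is a wedge of vectors, so it proves the stronger statement that \emph{any} anti-symmetric tensor annihilates $\bx^{m}$ --- which is exactly the interpretation the paper itself attaches to the lemma in the remark that follows it, but which its own proof does not literally establish. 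Your second, determinantal route (entries of $\cP$ are $m\times m$ minors of $A$, multilinearity collapses the contraction to a determinant with $m$ identical rows $\bx^{\top}A$) is essentially the paper's computation repackaged, since $\prod_{k}(\bx^{\top}\bfa^{k})\sum_{\sig}\sgn(\sig)$ is precisely the Leibniz expansion of that repeated-row determinant; it does, however, anticipate Theorem~\ref{th2-2}, so you are right to derive the minor formula by unwinding $\caL$ rather than citing it. One small point in your favor: both your argument and the paper's silently require $m\ge 2$ (for $m=1$ the statement is false, as $\cP\bx=\bx^{\top}\bfa^{1}\not\equiv 0$); you at least flag where this enters (the existence of a transposition), whereas the paper leaves it implicit in $\sum_{\sig}\sgn(\sig)=0$.
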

\begin{proof}
By definition, we have 
\beyy
\cP\bx^{m} 
&=&  \left( \sum\limits_{\sig} \sgn(\sig) \bfa^{i_{1}}\wedge \bfa^{i_{2}}\wedge\cdots \wedge \bfa^{i_{m}} \right) \bx^{m} \\
&=&  \left[ \sum\limits_{\sig} \sgn(\sig) \left(\bx^{\top}\bfa^{i_{1}}\right)\wedge\cdots \wedge \left(\bx^{\top}\bfa^{i_{m}}\right) \right] \\ 
&=& \prod\limits_{k=1}^{m}\left(\bx^{\top}\bfa^{k}\right)  \left(\sum\limits_{\sig} \sgn(\sig) \right)\\  
&=& 0.
\eeyy
Here $\sig=(i_{1},i_{2},\ldots,i_{m})\in \cP_{m}$ is any possible permutation on $[m]$.   
\end{proof}
\indent Lemma \ref{le2-1} shows that each anti-symmetric tensor corresponds to a zero polynomial, which is obvious in the matrix case. \\

\begin{thm}\label{th2-2}
Let $A=[\bfa^{1},\bfa^{2},\ldots,\bfa^{m}]\in\R^{n\times m}, \A=\bfa^{1}\wedge \bfa^{2}\wedge \ldots\wedge\bfa^{m}$. Then $\A=(A_{i_{1}\ldots i_{m}})\in\T_{m;n}$ with  
\beq\label{eq2-1}
A_{i_{1}\ldots i_{m}} =  \det A[i_{1}, \ldots, i_{m}|\colon]
\eeq
where $A[i_{1}, \ldots, i_{m}|\colon ]$ denotes the $m\times m$ submatrix of $A$ consisting of the $(i_{1}, \ldots, i_{m})$-rows of $A$. 
\end{thm}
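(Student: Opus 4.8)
The plan is to compute a generic entry of $\A$ straight from the definition (\ref{eq1-10}) of the Grassmann tensor and to recognize the resulting signed sum as the Leibniz expansion of a determinant. First I would dispose of the membership claim: since each $\bfa^{k}\in\R^{n}$, the outer product $\bfa^{1}\times\cdots\times\bfa^{m}$ is a rank-one tensor in $\T_{m;n}$ by (\ref{eq:rk1t}), and $\caL$ is a linear map on $\T_{m;n}$ built from mode permutations, so $\A=\caL(\bfa^{1}\times\cdots\times\bfa^{m})$ again lies in $\T_{m;n}$. This justifies writing $\A=(A_{i_{1}\ldots i_{m}})$.

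Next I would write out the entries. By (\ref{eq:rk1t}) the rank-one tensor $\bfa^{1}\times\cdots\times\bfa^{m}$ has $(i_{1},\ldots,i_{m})$-entry equal to $\prod_{k=1}^{m}(\bfa^{k})_{i_{k}}=\prod_{k=1}^{m}A_{i_{k}k}$, where $(\bfa^{k})_{i_{k}}$ is the $i_{k}$-th component of the $k$-th column, i.e. the $(i_{k},k)$-entry of $A$. Each $\sig\in\cP_{m}$ in the definition (\ref{eq1-9}) of $\caL$ permutes the modes, so its contribution to the $(i_{1},\ldots,i_{m})$-entry is $\prod_{k=1}^{m}A_{i_{\sig(k)}k}$. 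Summing with the signs $\sgn(\sig)=(-1)^{\tau(\sig)}$ gives
\[
A_{i_{1}\ldots i_{m}}=\sum_{\sig\in\cP_{m}}\sgn(\sig)\prod_{k=1}^{m}A_{i_{\sig(k)}k}.
\]

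Finally I would identify this with $\det A[i_{1},\ldots,i_{m}\,|\colon]$. Writing $M:=A[i_{1},\ldots,i_{m}\,|\colon]$, its $(r,c)$-entry is $M_{rc}=A_{i_{r}c}$, so the Leibniz formula reads $\det M=\sum_{\sig}\sgn(\sig)\prod_{c}A_{i_{\sig(c)}c}$, which matches the displayed sum term by term. The step requiring the most care is fixing the convention by which $\sig$ acts on the modes of a tensor and confirming that the sign carried by $\caL$ coincides with the Leibniz sign; reassuringly, because one sums over all of $\cP_{m}$ and $\sgn(\sig)=\sgn(\sig^{-1})$, the two natural conventions (indexing through $\sig$ or through $\sig^{-1}$) yield the same value, so no genuine ambiguity survives. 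The case $m=2$ noted after (\ref{eq1-10}) is a convenient sanity check: there the entry is $(\bv_{1})_{i}(\bv_{2})_{j}-(\bv_{1})_{j}(\bv_{2})_{i}$, exactly the determinant of the $2\times2$ row-submatrix, confirming both the formula and the sign bookkeeping.
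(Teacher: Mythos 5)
Your proposal is correct and follows essentially the same route as the paper: unwind the definition $\A=\caL(\bfa^{1}\times\cdots\times\bfa^{m})$, write the $(i_{1},\ldots,i_{m})$-entry as the signed sum $\sum_{\sig}\sgn(\sig)\prod_{k}A_{i_{\sig(k)}k}$, and recognize it as the Leibniz expansion of $\det A[i_{1},\ldots,i_{m}|\colon]$. The only difference is cosmetic --- the paper lets $\sig$ act on the column indices while you let it act on the row indices --- and your remark that $\sgn(\sig)=\sgn(\sig^{-1})$ correctly disposes of that discrepancy.
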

\begin{proof}
For any $\eta:=(i_{1}, i_{2},\ldots, i_{m})\in S(m,n)$, we denote $r=\abs{\eta}$ for the number of distinct elements in set $\set{i_{1}, i_{2},\ldots, i_{m}}$. Then we have 
\beyy
A_{i_{1}i_{2}\ldots i_{m}} = A_{\eta} 
&=&(\bfa^{1}\wedge \bfa^{2}\wedge \ldots\wedge\bfa^{m})_{\eta} \\
&=&[\caL(\bfa^{1}\times\bfa^{2}\times\ldots\times\bfa^{m})]_{\eta} \\
&=&  \sum\limits_{\sig}\sgn(\sig) a_{i_{1}\sig(1)} a_{i_{2}\sig(2)} \ldots a_{i_{m}\sig(m)}\\
&=&   \det A[i_{1}, \ldots, i_{m}|\colon]   
\eeyy
\end{proof}
\indent  Given an \mnrt $\A\in\T_{m;n}$ and $\kappa:=(P_{1},\ldots, P_{m})$ where $P_{k}$ is a nonempty subset of $[n]$ for each $k$.  A subtensor of $\A$ determined by 
$\kappa$, denoted $\A[\kappa]$, is an $m$-order tensor whose elements are indexed within $P_{1}\times \cdots \times P_{m}$.  If $P_{1}= \ldots=P_{m}=S$ with 
$\abs{S}=r$, we denote $\A[\kappa]$ by $\A[S]$ and call it an \emph{$r$-dimensional principal subtensor}, i.e., $\A[S]\in\T_{m;r}$.  For $S=[r]$, $\A[S]$ is called a \emph{leading principal subtensor} (abbrev. LPS). \\
\indent  Now we let $\A=\bfa^{1}\wedge \bfa^{2}\wedge \ldots\wedge\bfa^{m}$.  By Theorem \ref{th2-2}, we see that $\A=0$ if and only if vectors $\bfa^{1},\bfa^{2},\ldots,\bfa^{m}$ are linearly dependent.  When $\bfa^{1},\bfa^{2},\ldots,\bfa^{m}$ are linearly independent, we have  

\begin{cor}\label{co2-3}
Let $A=[\bfa^{1},\bfa^{2},\ldots,\bfa^{m}]\in\R^{n\times m}$ with $\rank(A)=m\le n$ and $\A=\bfa^{1}\wedge \bfa^{2}\wedge \ldots\wedge\bfa^{m}$.  Then 
for any $\sig:=(i_{1}, i_{2},\ldots, i_{m})\in S(m,n)$,  $\A[\sig] \neq 0$ if and only if  $\abs{\sig}=m$ and $\det A[\sig |\colon ]\neq 0$. 
\end{cor}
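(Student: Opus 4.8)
The plan is to derive everything from Theorem \ref{th2-2} together with the elementary fact that a determinant with two equal rows vanishes. Throughout, recall that $\A[\sig]$ denotes the principal subtensor $\A[S]$ on the index set $S:=\set{i_{1},\ldots,i_{m}}$, so that its entries are precisely the $A_{j_{1}\ldots j_{m}}$ with every $j_{\ell}\in S$, and that $\abs{\sig}=\abs{S}$ counts the distinct indices among $i_{1},\ldots,i_{m}$. By Theorem \ref{th2-2} each such entry equals $\det A[j_{1},\ldots,j_{m}|\colon]$, the determinant of the $m\times m$ submatrix of $A$ formed by rows $j_{1},\ldots,j_{m}$; this identity is what converts statements about the subtensor into statements about minors of $A$.

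First I would dispose of the case $\abs{\sig}<m$. Here $\abs{S}<m$, so by the pigeonhole principle every length-$m$ tuple $(j_{1},\ldots,j_{m})$ with entries in $S$ must repeat some index. The corresponding entry of $\A[\sig]$ is then $\det A[j_{1},\ldots,j_{m}|\colon]$ with two equal rows, hence zero. Thus $\abs{\sig}<m$ forces $\A[\sig]=0$, which already gives the implication that $\A[\sig]\neq 0$ entails $\abs{\sig}=m$.

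Next I would treat the case $\abs{\sig}=m$, where $S$ has exactly $m$ elements. Entries of $\A[\sig]$ with a repeated index still vanish by the same determinant argument, so the only entries that can be nonzero are those indexed by tuples that are permutations of $S$. For such a permutation $\pi$, Theorem \ref{th2-2} together with the antisymmetry of the determinant under row interchanges gives the entry as $\sgn(\pi)\det A[\sig|\colon]$. Consequently every entry of $\A[\sig]$ equals $0$ or $\pm\det A[\sig|\colon]$, so $\A[\sig]=0$ if and only if $\det A[\sig|\colon]=0$. Combining the two cases yields the stated equivalence.

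I do not expect a genuine obstacle here: the entire content has been front-loaded into Theorem \ref{th2-2}, and what remains is bookkeeping. The only points demanding care are keeping the two roles of $\sig$ distinct (an ordered tuple versus its underlying set $S$) and correctly tracking the sign $\sgn(\pi)$ relating a general distinct-index entry to the reference minor $\det A[\sig|\colon]$; neither affects the vanishing/nonvanishing dichotomy that the corollary asserts. The hypothesis $\rank(A)=m$ is not needed for the equivalence itself but guarantees it is non-vacuous, since by Theorem \ref{th2-2} it is exactly the condition under which some $\det A[\sig|\colon]$, and hence some principal subtensor, is nonzero.
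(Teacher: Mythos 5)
Your proof is correct and follows the same route the paper intends: the paper states Corollary \ref{co2-3} as an immediate consequence of the entrywise determinant formula in Theorem \ref{th2-2}, and your argument (repeated indices give a minor with two equal rows, hence zero; distinct indices give $\pm\det A[\sig|\colon]$) is exactly the bookkeeping that justifies it. Your closing remark that $\rank(A)=m$ is not needed for the equivalence itself is also accurate.
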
 

\begin{cor}\label{co2-4}
Let $\bfa^{1},\bfa^{2},\ldots,\bfa^{m}\in\R^{n}$ be linearly independent and $\A=\bfa^{1}\wedge \bfa^{2}\wedge \ldots\wedge\bfa^{m}$.  Then $\A$ has 
at most $\frac{n!}{(n-m)!}$ nonzero elements.   
\end{cor}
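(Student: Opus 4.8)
The plan is to translate the question about nonzero entries of $\A$ into a purely combinatorial count of admissible index tuples, leaning entirely on the determinantal description of the entries already established.

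First I would appeal to Theorem~\ref{th2-2}, which identifies each entry as $A_{i_{1}\ldots i_{m}}=\det A[i_{1},\ldots,i_{m}|\colon]$, the determinant of the $m\times m$ submatrix of $A$ formed by rows $i_{1},\ldots,i_{m}$. The crucial structural observation is that if two of the indices coincide --- say $i_{j}=i_{k}$ for some $j\neq k$ --- then this submatrix has two identical rows, so its determinant vanishes. Consequently every nonzero entry of $\A$ must be indexed by a tuple $\sig=(i_{1},\ldots,i_{m})$ whose entries are pairwise distinct, i.e.\ $\abs{\sig}=m$. This is precisely the necessary condition already recorded in Corollary~\ref{co2-3}.

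It then remains to count how many tuples in $S(m,n)$ have pairwise distinct entries. Such tuples are exactly the injective maps from $[m]$ into $[n]$: there are $n$ choices for $i_{1}$, then $n-1$ for $i_{2}$, and so on, giving $n(n-1)\cdots(n-m+1)=\frac{n!}{(n-m)!}$ of them. Since the nonzero entries of $\A$ form a subset of this collection, $\A$ has at most $\frac{n!}{(n-m)!}$ nonzero elements, as claimed.

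There is essentially no analytic obstacle here; the only point requiring care is that the stated bound is an upper bound and need not be attained. Among the tuples with distinct indices, only those with $\det A[\sig|\colon]\neq 0$ actually yield nonzero entries, and by Corollary~\ref{co2-3} these are governed by which $m\times m$ row-minors of $A$ are nonsingular. Linear independence of the columns guarantees at least one such minor is nonzero (so that $\A\neq 0$), but does not force all of them to be --- which is exactly why the count is phrased with ``at most'' rather than as an equality.
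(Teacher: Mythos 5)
Your proposal is correct and follows essentially the same route as the paper: both arguments reduce the count to index tuples with pairwise distinct entries via the determinantal description of Theorem~\ref{th2-2} (equivalently Corollary~\ref{co2-3}), the only cosmetic difference being that you count injections $[m]\to[n]$ directly as $n(n-1)\cdots(n-m+1)$ while the paper organizes the same count as $\binom{n}{m}$ principal subtensors with $m!$ entries each. Your closing remark on why the bound is only an upper bound is a accurate and slightly more explicit than the paper's phrasing.
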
 
\begin{proof}
By Corollary \ref{co2-3}, there are at most ${n\choose m}$ nonzero $LPS$ subtensors $\A[\al]\in\T_{m;m}$ where $\abs{\al}=m$ and $\al\in S(m,n)$ is an $m$-tuple 
chosen from $[n]$ with no repetitions and there are $m!$ nonzero elements in $\A[\al]$ if $\det(A[\al |\colon])\neq 0$.   
\end{proof}
\indent It follows from Corollary \ref{co2-3} that 
\begin{cor}\label{co2-5}    
\beq\label{eq2-2}
\A=\bfa^{1}\wedge \bfa^{2}\wedge \ldots\wedge\bfa^{n} = \det(A) \cH
\eeq
where $\cH=  (H_{\sig})\in\T_{n;n}$ with $H_{\sig} = \sgn(\sig)$ being the \emph{generalized sign function} defined as 
\[
\sgn(\sig) = \left\{ \begin{array}{cl}  1,  & \texttt{ if } \sig\in E_{n},\\  -1,  & \texttt{ if } \sig\in O_{n},\\  0,  & \texttt{ otherwise. } \end{array} \right.   
\]
Here $E_{n}$ and $O_{n}$ denote respectively the set of even and odd permutations on $[n]$. 
\end{cor}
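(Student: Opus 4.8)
The plan is to read the entries of $\A$ directly off Theorem \ref{th2-2} and compare them, index by index, with the entries of $\det(A)\cH$. Since here $m=n$, Theorem \ref{th2-2} gives, for every $\sig=(i_{1},\ldots,i_{n})\in S(n,n)$, the entry $A_{\sig}=\det A[i_{1},\ldots,i_{n}|\colon]$, and this time the submatrix $A[i_{1},\ldots,i_{n}|\colon]$ is genuinely $n\times n$. So it suffices to show that $\det A[i_{1},\ldots,i_{n}|\colon]=\sgn(\sig)\det(A)$ for every $\sig$, where $\sgn$ is the generalized sign function of the statement; this is exactly the entrywise assertion $A_{\sig}=\det(A)\,H_{\sig}$.

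First I would split on the number $r=\abs{\sig}$ of distinct indices appearing in $\sig$. If $r<n$, then two of the selected rows coincide, so the submatrix $A[i_{1},\ldots,i_{n}|\colon]$ has two equal rows and hence vanishing determinant; this matches $\sgn(\sig)=0$, precisely the value assigned in the ``otherwise'' branch of the generalized sign function. If instead $r=n$, then $\sig$ lists each element of $[n]$ exactly once, i.e.\ $\sig$ is a genuine permutation of $[n]$, and $A[i_{1},\ldots,i_{n}|\colon]$ is obtained from $A$ by permuting its rows according to the map $k\mapsto i_{k}$.

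The remaining step is the standard fact that permuting the rows of a square matrix by $\sig$ multiplies its determinant by $\sgn(\sig)$, so $\det A[i_{1},\ldots,i_{n}|\colon]=\sgn(\sig)\det(A)$ whenever $\sig$ is a permutation. Combining the two cases yields $A_{\sig}=\sgn(\sig)\det(A)=\det(A)\,H_{\sig}$ for all $\sig\in S(n,n)$, which is the entrywise form of $\A=\det(A)\cH$. I expect no genuine obstacle: the only points needing care are the clean case split on $\abs{\sig}$ (so that the $\sgn(\sig)=0$ branch is correctly realized by the repeated-row determinant) and the orientation convention in the row-permutation identity, checking that the permutation reading $k\mapsto i_{k}$ carries the sign matching the definition via $E_{n}$ and $O_{n}$. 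As a consistency check one notes that when the $\bfa^{k}$ are linearly dependent both sides vanish, since then $\det(A)=0$ and $\A=0$ by Theorem \ref{th2-2}.
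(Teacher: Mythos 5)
Your proof is correct and follows essentially the same route as the paper, which states this corollary as an immediate consequence of Corollary \ref{co2-3} (itself resting on the entrywise determinant formula of Theorem \ref{th2-2}); you have simply filled in the details the paper leaves implicit, namely the case split on $\abs{\sig}$ and the row-permutation determinant identity. No gaps.
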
 
\indent As a special case of Corollary \ref{co2-3}, we have 
\begin{cor}\label{co2-6}
Let $\al,\be\in\R^{2}$. Then   
\beq\label{eq2-3}
\cP[\al,\be] = \det[\al,\be] E_{2}
\eeq
where $E_{2}$ is the elementary antisymmetric matrix defined as
\beq\label{eq2-4} 
 E_{2} = \left[\begin{array}{cc} 0&1\\ -1& 0\end{array}\right]   
\eeq  
\end{cor}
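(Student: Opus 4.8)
The plan is to obtain Corollary~\ref{co2-6} directly from Corollary~\ref{co2-5} (equivalently from Theorem~\ref{th2-2}) by specializing the general formula to the smallest nontrivial case $n=m=2$. Writing $A=[\al,\be]\in\R^{2\times 2}$, Corollary~\ref{co2-5} gives $\cP[\al,\be]=\al\wedge\be=\det(A)\,\cH$, so it suffices to identify the $2\times 2$ tensor $\cH=(H_{\sig})$ with the matrix $E_{2}$.

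First I would enumerate the index set $S(2,2)=\set{(1,1),(1,2),(2,1),(2,2)}$ and evaluate the generalized sign function $H_{\sig}=\sgn(\sig)$ on each index. The diagonal indices $(1,1)$ and $(2,2)$ carry a repeated entry and so are not permutations of $[2]$; they fall into the ``otherwise'' branch, giving $H_{11}=H_{22}=0$. The index $(1,2)$ is the identity permutation, an element of $E_{2}$, so $H_{12}=1$; the index $(2,1)$ is the transposition, an element of $O_{2}$, so $H_{21}=-1$. Arranging these four values as a matrix yields exactly
\[
\cH=\left[\begin{array}{cc} 0 & 1\\ -1 & 0\end{array}\right]=E_{2},
\]
which, combined with $\cP[\al,\be]=\det(A)\,\cH$, completes the argument.

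Alternatively, I would bypass Corollary~\ref{co2-5} and compute the four entries straight from Theorem~\ref{th2-2}, using $A_{i_{1}i_{2}}=\det A[i_{1},i_{2}|\colon]$. The two diagonal entries involve a $2\times 2$ submatrix with two equal rows and hence vanish, while $A_{12}=\det A=\det[\al,\be]$ and $A_{21}=-\det[\al,\be]$ by the row-swap antisymmetry of the determinant; this reproduces $\det[\al,\be]\,E_{2}$ directly.

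I do not expect any genuine obstacle here, since the statement is a routine specialization of results already established. The only point requiring care is the sign bookkeeping: I must confirm that the identity permutation is even (contributing $+1$) and the single transposition is odd (contributing $-1$), so that the off-diagonal signs land in the order prescribed by $E_{2}$ rather than its negative. Because the two derivations above agree, the consistency of this sign accounting serves as a built-in check on the orientation convention.
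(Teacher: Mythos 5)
Your proposal is correct and follows essentially the route the paper intends: the paper states Corollary~\ref{co2-6} without proof as a direct specialization of the preceding general results, and your identification of $\cH$ with the matrix $E_{2}$ for $n=m=2$ (or equivalently the entrywise computation via Theorem~\ref{th2-2}) is exactly the omitted verification, with the sign bookkeeping done correctly. No issues.
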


\indent Given a tensor $\A\in\T_{m;n}$ and a vector $\bfb\in\R^{n}$. The \emph{wedge product} of $\A$ with $\bfb$, denoted $\A\wedge \bfb$, is defined by 
\beq\label{eq2-5}
\A\wedge\bfb = \sum\limits_{k=1}^{m+1} (-1)^{k-1} \A\times_{\hat{k}} \bfb  
\eeq
where $\A\times_{\hat{k}} \bfb\in\T_{m+1;n}$ is the outer-product of $\A$ and $\bfb$ with the $k$-mode assigned to $\bfb$.  We will show in the following that 
$\A\wedge \bfc = \bfa\wedge \bfb\wedge \bfc$ if $\A=\bfa\wedge \bfb$ for some $\bfa,\bfb\in\R^{n}$.   

\begin{exm}\label{exm2-5}
Let $\bfa\in\R^{n}$ be a nonzero vector with $a_{i}$ as its $i$th coordinate, and $\A=(A_{ijk})=I_{n}\wedge \bfa$. Then $\A\in\T_{3;n}$ with elements 
\[
\begin{array}{ll}
A_{iii} = a_{i},                              & \forall i,\\
A_{iij} = A_{jii} =- A_{iji}=a_{j},     & \mbox{if}~ i\neq j,  \\
A_{ijk}=0,                                    & \mbox{if}~  i, j, k ~\mbox{are distinct}.  
\end{array} 
\]
Thus we have $\A\bx^{3}=\seq{\bx,\bx}\seq{\bfa, \bx} = \sum\limits_{i=1}^{n} a_{i}x_{i}^{3}$. 
\end{exm}
\indent  The following proposition offers the connection of the matrix-vector wedge and the vector wedges as well as the association law on wedge products of vectors. 
\begin{prop}\label{pro2-6}
Let $\bfa,\bfb,\bfc\in \R^{n}$ be any vectors with $n\ge 3$. Then
\beq\label{eq2-6}
(\bfa\wedge \bfb)\wedge \bfc = \bfa\wedge (\bfb\wedge \bfc) = \bfa\wedge \bfb\wedge \bfc
\eeq
\end{prop}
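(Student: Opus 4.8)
The plan is to reduce both asserted identities to entrywise statements and to recognize every expression appearing in (\ref{eq2-6}) as one and the same $3\times 3$ determinant. First I would record, via Theorem \ref{th2-2} applied to the matrix $[\bfa,\bfb,\bfc]\in\R^{n\times 3}$, that the fully wedged tensor has entries
\[
(\bfa\wedge\bfb\wedge\bfc)_{ijk}=\det\left[\begin{array}{ccc} a_i & b_i & c_i\\ a_j & b_j & c_j\\ a_k & b_k & c_k\end{array}\right],
\]
the determinant of the $3\times 3$ submatrix formed by rows $i,j,k$ of $[\bfa,\bfb,\bfc]$. The entire proof then amounts to showing that the two parenthesized products are the Laplace (cofactor) expansions of this single determinant along its last and its first column, respectively.

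For the left-hand identity I would first compute the $2$-tensor $\A:=\bfa\wedge\bfb$, which by Theorem \ref{th2-2} has entries $A_{jk}=a_jb_k-a_kb_j$. Applying the definition (\ref{eq2-5}) with $m=2$ gives $\A\wedge\bfc=\A\times_{\hat 1}\bfc-\A\times_{\hat 2}\bfc+\A\times_{\hat 3}\bfc$, so that
\[
(\A\wedge\bfc)_{ijk}=c_iA_{jk}-c_jA_{ik}+c_kA_{ij}.
\]
After substituting the entries of $\A$, this is exactly the cofactor expansion of the determinant above along its third (the $\bfc$-) column, and therefore equals $(\bfa\wedge\bfb\wedge\bfc)_{ijk}$. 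This establishes $(\bfa\wedge\bfb)\wedge\bfc=\bfa\wedge\bfb\wedge\bfc$.

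For the right-hand identity I would first observe that the construction (\ref{eq2-5}) extends symmetrically to a vector wedged on the left of a tensor: for $\B\in\T_{m;n}$ one puts $\bfa\wedge\B=\sum_{k=1}^{m+1}(-1)^{k-1}\B\times_{\hat k}\bfa$, inserting $\bfa$ into the $k$-mode with the same alternating signs. Taking $\B:=\bfb\wedge\bfc$, with entries $B_{jk}=b_jc_k-b_kc_j$, this yields
\[
(\bfa\wedge\B)_{ijk}=a_iB_{jk}-a_jB_{ik}+a_kB_{ij},
\]
which is precisely the cofactor expansion of the same determinant along its first (the $\bfa$-) column. Hence $\bfa\wedge(\bfb\wedge\bfc)=\bfa\wedge\bfb\wedge\bfc$, and the two computations together give (\ref{eq2-6}).

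All the genuine content is carried by the elementary fact that a $3\times 3$ determinant is independent of the column along which it is expanded; the only steps needing care are the index bookkeeping in matching the mode-placement operators $\times_{\hat k}$ and the alternating signs $(-1)^{k-1}$ of (\ref{eq2-5}) to the cofactor signs of the third- and first-column expansions. I expect the main obstacle to be purely that of fixing a consistent convention for the left wedge $\bfa\wedge\B$: it must be arranged so as to reproduce the first-column expansion exactly, since an orientation or sign convention off by $(-1)$ would spoil the associativity claimed in (\ref{eq2-6}) by a global factor.
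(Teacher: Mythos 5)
Your proof is correct and, at bottom, carries out the same computation as the paper's: both unwind $(\bfa\wedge\bfb)\wedge\bfc$ via the definition (\ref{eq2-5}) into the same six signed rank-one terms. The packaging differs. The paper expands $\bfa\wedge\bfb=\bfa\times\bfb-\bfb\times\bfa$ and manipulates the six outer products symbolically until they match the defining sum for $\bfa\wedge\bfb\wedge\bfc$; you instead work entrywise, invoke Theorem \ref{th2-2} to identify $(\bfa\wedge\bfb\wedge\bfc)_{ijk}$ with the $3\times 3$ determinant of rows $i,j,k$ of $[\bfa,\bfb,\bfc]$, and recognize the two parenthesized products as its Laplace expansions along the third and first columns. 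Your route buys a cleaner conceptual reason for the identity (column-independence of the Laplace expansion) at the cost of importing Theorem \ref{th2-2}, while the paper's is self-contained term-matching; I verified that your formula $(\A\wedge\bfc)_{ijk}=c_iA_{jk}-c_jA_{ik}+c_kA_{ij}$ and the cofactor signs do line up. One genuine improvement on your side: the paper defines $\A\wedge\bfb$ only for a vector wedged on the right and dismisses the identity $\bfa\wedge(\bfb\wedge\bfc)=\bfa\wedge\bfb\wedge\bfc$ with ``similar arguments,'' so the left wedge $\bfa\wedge\B$ is never actually defined there. You notice this, state the convention $\bfa\wedge\B=\sum_{k}(-1)^{k-1}\B\times_{\hat k}\bfa$ explicitly, and check that it reproduces the first-column expansion with the correct signs --- exactly the point where an off-by-one sign convention would break the claimed associativity.
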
 
\begin{proof}
We need only to show
\beq\label{eq2-7}
(\bfa\wedge \bfb)\wedge \bfc = \bfa\wedge \bfb\wedge \bfc
\eeq
Denote $A=(a_{ij})=\bfa\wedge \bfb$.  By (\ref{eq2-5}), the left hand side of (\ref{eq2-7}) can be rewritten equivalently as  
\beyy 
A\wedge \bfc  
&=& (\bfa\times \bfb - \bfb\times \bfa)\times_{\hat{1}} \bfc - (\bfa\times \bfb - \bfb\times \bfa)\times_{\hat{2}}\bfc +(\bfa\times \bfb - \bfb\times \bfa)\times_{\hat{3}}\bfc\\
&=& \bfc\times\bfa\times\bfb - \bfc\times\bfb\times \bfa - \bfa\times\bfc\times\bfb + \bfb\times\bfc\times\bfa +\bfa\times\bfb\times \bfc - \bfb\times \bfa\times\bfc\\
&=&  \bfa\times\bfb\times\bfc + \bfb\times\bfc\times\bfa + \bfc\times\bfa\times\bfb - \bfa\times\bfc\times\bfb - \bfc\times\bfb\times \bfa - \bfb\times \bfa\times\bfc\\
&=& \bfa\wedge \bfb\wedge \bfc
\eeyy 
The equality $\bfa\wedge (\bfb\wedge \bfc) = \bfa\wedge \bfb\wedge \bfc$ can also be proved by similar arguments. 
\end{proof}

\indent  To generalize (\ref{eq2-5}) to any pair of tensors $(\A,\B)$ with $\A\in\T_{p;n}, \B\in\T_{q; n}$, we let $m:=p+q, \tea\subset [m], \abs{\tea}=p$ and $\tea^{c}$ be the 
complement of $\tea$ in $[p+q]$ ($\abs{\tea^{c}}=q$). We abuse the notation $\tea$ ($\tea^{c}$) both for the subset and the corresponding sequence ordered increasingly. 
Then $\tea\cup\tea^{c}=[m]$ is a permutation of $[m]$.  Now we define   
\beq\label{eq2-8}
\A\wedge \B = \sum\limits_{\tea\in Q_{p,m}} \sgn(\tea) \A\times_{\tea} \B  
\eeq 
where $Q_{p,m}$ is the set of $p$-sequences whose entries are chosen from $[m]$ with increasing order.  For $p=q=2$ and $A,B\in\R^{n\times n}$, (\ref{eq2-8}) implies 
\beq\label{eq2-9}
A\wedge B = A\times_{(1,2)} B - A\times_{(1,3)} B + A\times_{(1,4)} B + A\times_{(2,3)} B - A\times_{(2,4)} B +A\times_{(3,4)} B 
\eeq
It is out of our expectation when we find two more plus items than the minus ones in (\ref{eq2-9}), other than a balance between the number of positive and that of negative items in the expression of the vector wedges.  Furthermore, If we take $A=B$, then 
\beq\label{eq2-10}
 A\wedge A = 2(A\times A -A\times_{(1,3)} A +A\times_{(1,4)}A)  
\eeq
\begin{cor}\label{co2-7}
Let $A\in \R^{n\times n}$ be either a symmetric matrix or a rank-one matrix. Then $A\wedge A = 2 (A\times A)$.
\end{cor}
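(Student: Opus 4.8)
The plan is to start from the explicit expansion (\ref{eq2-10}), which already writes $A\wedge A$ as a combination of only three outer products. Since
\[
A\wedge A = 2\left(A\times A - A\times_{(1,3)} A + A\times_{(1,4)} A\right),
\]
the assertion $A\wedge A = 2(A\times A)$ is equivalent to the single tensor identity $A\times_{(1,3)} A = A\times_{(1,4)} A$. Thus the whole corollary reduces to comparing these two $4$-order tensors entrywise. Following the convention of Example \ref{exm2-1}, their $(i_1,i_2,i_3,i_4)$-entries are $A_{i_1 i_3}A_{i_2 i_4}$ and $A_{i_1 i_4}A_{i_2 i_3}$, so everything comes down to showing
\[
A_{i_1 i_3}A_{i_2 i_4} = A_{i_1 i_4}A_{i_2 i_3} \qquad \mbox{for all } i_1,i_2,i_3,i_4\in[n].
\]

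First I would dispose of the rank-one case, which is the cleanest. Writing $A=\bu\bv^{\top}$ with $\bu,\bv\in\R^{n}$, so that $A_{ij}=u_i v_j$, both sides of the displayed identity equal $u_{i_1}u_{i_2}v_{i_3}v_{i_4}$; hence $A\times_{(1,3)} A = A\times_{(1,4)} A$ and $A\wedge A = 2(A\times A)$ follows at once from (\ref{eq2-10}). This uses nothing beyond the definition of the outer product.

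For the symmetric case I would attempt the same comparison, exploiting $A_{ij}=A_{ji}$. Observe that $A\times_{(1,4)} A$ is exactly the tensor obtained from $A\times_{(1,3)} A$ by interchanging its third and fourth modes, so the identity holds iff $A\times_{(1,3)} A$ is invariant under that swap. The difference of the two entries is $A_{i_1 i_3}A_{i_2 i_4}-A_{i_1 i_4}A_{i_2 i_3}$, which is precisely a $2\times 2$ minor of $A$; it vanishes for every choice of indices exactly when $\rank(A)\le 1$. This is where I expect the genuine obstacle: symmetry by itself does not force these minors to vanish — the identity $A=I_n$ gives $A_{11}A_{22}-A_{12}A_{21}=1\neq 0$ — so the effective hypothesis in both cases is $\rank(A)\le 1$, with a symmetric rank-one matrix $A=\bu\bu^{\top}$ already covered by the first case. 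I would therefore verify the minor condition directly and treat the rank-one reduction as the real content of the corollary.
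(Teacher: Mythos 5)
Your reduction of the corollary to the single identity $A\times_{(1,3)}A=A\times_{(1,4)}A$, i.e.\ to the vanishing of all $2\times 2$ minors $A_{i_1i_3}A_{i_2i_4}-A_{i_1i_4}A_{i_2i_3}$, is exactly right, and your rank-one argument coincides in substance with the paper's (the paper writes $A=\al\times\be$ and checks $(\al\times\be)\times_{(1,3)}(\al\times\be)=(\al\times\be)\times_{(1,4)}(\al\times\be)=\al\times\al\times\be\times\be$, which is your entrywise computation). More importantly, your objection to the symmetric case is correct: as printed, the claim fails for every symmetric matrix of rank at least two. For $A=I_2$ and the index $(1,2,1,2)$, formula (\ref{eq2-10}) gives $(A\wedge A)_{1212}=2(A_{12}A_{12}-A_{11}A_{22}+A_{12}A_{21})=-2$, while $2(A\times A)_{1212}=0$. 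So the effective hypothesis is $\rank(A)\le 1$, exactly as you conclude, and symmetry should be dropped from the statement (a symmetric rank-one matrix $A=\bu\bu^{\top}$ being subsumed by the rank-one case).

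It is worth seeing precisely where the paper's own proof of the symmetric case goes wrong, because your analysis pinpoints it. The paper writes $A=\sum_i\al_i\times\al_i$ (already presupposing positive semidefiniteness rather than mere symmetry), expands $A\wedge A=\sum_{i,j}(\al_i\times\al_i)\wedge(\al_j\times\al_j)$ by bilinearity, and then asserts $(\al_i\times\al_i)\wedge(\al_j\times\al_j)=2\,\al_i\times\al_i\times\al_j\times\al_j$ for all $i,j$. That identity is the rank-one case applied to a pair of \emph{different} rank-one matrices, and it is false for $i\neq j$: with $\al_i=(1,0)^{\top}$ and $\al_j=(0,1)^{\top}$ the $(1,2,1,2)$ entry of the left-hand side is $-1$ while that of the right-hand side is $0$. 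Only the diagonal terms $i=j$ are covered by the rank-one computation, and the $I_2$ example above shows the off-diagonal discrepancies do not cancel in the sum. Note also that the subsequent Corollary \ref{co2-8} inherits the problem, since it relies on $I_n\wedge I_n=2\,I_n\times I_n$; a direct computation from (\ref{eq2-10}) instead gives $(I_n\wedge I_n)X=2\left(\tr(X)I_n-X+X^{\top}\right)$, which reduces to $2\tr(X)I_n$ only when $X$ is symmetric.
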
 
\begin{proof}
First we assume that $\rank(A)=1$. Then there exist some vectors $\al,\be\in\R^{n}$ such that $A=\al\times \be$. Note that 
\[ 
(\al\times \be)\times_{(1,3)} (\al\times \be) = (\al\times \be)\times_{(1,4)} (\al\times \be) = \al\times\al \times \be\times \be,  
\]
by (\ref{eq2-9}), we get 
\beyy
A\wedge A &=& (\al\times \be)\wedge (\al\times \be) \\
                  &=& 2 [(\al\times \be)\times (\al\times \be) - (\al\times \be)\times_{(1,3)} (\al\times \be) +(\al\times \be)\times_{(1,4)} (\al\times \be)]\\
                  &=& 2(\al\times\be\times\al\times\be) =2A\times A. 
\eeyy
If $A$ is symmetric, then $A$ can be written as $A=\sum\limits_{i=1}^{r} \al_{i}\times \al_{i}$ for some $\al_{i}\in\R^{n}$. Thus we have by 
\beyy
A\wedge A &=& \sum\limits_{i,j}(\al_{i}\times \al_{i})\wedge (\al_{j}\times\al_{j}) \\
                  &=& 2\sum\limits_{i,j} \al_{i}\times\al_{i}\times\al_{j}\times\al_{j} =2(A\times A). 
\eeyy
The last equation comes from the expansion of $A\times A$. 
\end{proof}

\begin{cor}\label{co2-8}
For any matrix $X\in\R^{n\times n}$, we have 
\beq\label{eq2-11}
(I_{n}\wedge I_{n})X = 2 \tr(X) I_{n} 
\eeq
where the tensor-matrix product is contractive on the last two modes. 
\end{cor}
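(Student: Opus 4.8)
The plan is to bypass the term-by-term expansion of $I_{n}\wedge I_{n}$ coming from (\ref{eq2-10}) and instead reduce the order-four tensor to a single outer product by invoking Corollary \ref{co2-7}. Since the identity matrix $I_{n}$ is symmetric, Corollary \ref{co2-7} applies and gives $I_{n}\wedge I_{n}=2\,(I_{n}\times I_{n})$. This is the decisive simplification: it removes the two ``cross'' outer products $I_{n}\times_{(1,3)}I_{n}$ and $I_{n}\times_{(1,4)}I_{n}$ that otherwise appear in (\ref{eq2-10}), leaving only the aligned product $I_{n}\times I_{n}$, whose entries are a product of two Kronecker deltas.

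With this in hand I would carry out a short entrywise computation. First I record
\[
(I_{n}\times I_{n})_{i_{1}i_{2}i_{3}i_{4}}=(I_{n})_{i_{1}i_{2}}(I_{n})_{i_{3}i_{4}}=\delta_{i_{1}i_{2}}\,\delta_{i_{3}i_{4}} .
\]
Next I read $I_{n}\wedge I_{n}\in\T_{4;n}$ as a paired tensor with output modes $(i_{1},i_{2})$ and input modes $(j_{1},j_{2})$ taken to be the last two modes, so that the contraction prescribed in the corollary is the instance of (\ref{eq1-7}) with $k=2$,
\[
\bigl((I_{n}\wedge I_{n})X\bigr)_{i_{1}i_{2}}=\sum_{j_{1},j_{2}}(I_{n}\wedge I_{n})_{i_{1}i_{2}j_{1}j_{2}}\,X_{j_{1}j_{2}} .
\]
Substituting $I_{n}\wedge I_{n}=2\,(I_{n}\times I_{n})$ and the delta expression above, the factor $\delta_{i_{1}i_{2}}$ pulls out of the sum while $\delta_{j_{1}j_{2}}$ forces $j_{1}=j_{2}$, collapsing the double sum to the trace:
\[
\bigl((I_{n}\wedge I_{n})X\bigr)_{i_{1}i_{2}}=2\,\delta_{i_{1}i_{2}}\sum_{j_{1},j_{2}}\delta_{j_{1}j_{2}}X_{j_{1}j_{2}}=2\,\delta_{i_{1}i_{2}}\sum_{j}X_{jj}=2\tr(X)\,\delta_{i_{1}i_{2}} .
\]
Reassembling over $(i_{1},i_{2})$ gives $(I_{n}\wedge I_{n})X=2\tr(X)I_{n}$, which is (\ref{eq2-11}). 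Crucially, $X$ enters the final expression only through $\tr(X)$, so the identity holds for an arbitrary $X\in\R^{n\times n}$ with no symmetry assumption on $X$.

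The computation is short, so the only real work lies in two bookkeeping points. The first is the appeal to Corollary \ref{co2-7}: I would confirm that $I_{n}$ falls under its hypothesis (it is symmetric), so that the reduction $I_{n}\wedge I_{n}=2\,(I_{n}\times I_{n})$ is legitimate; this reduction is what makes the answer independent of the antisymmetric part of $X$ and hence valid for every matrix, and it is the step I would verify most carefully. The second is the contraction convention in (\ref{eq1-7}): I must ensure that the last two modes are the summed ones and, in particular, that $\sum_{j_{1},j_{2}}\delta_{j_{1}j_{2}}X_{j_{1}j_{2}}$ evaluates to the trace $\sum_{j}X_{jj}$ rather than to the sum of all entries of $X$. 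Once these two points are secured, the conclusion follows immediately.
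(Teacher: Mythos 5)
Your proof is correct and takes essentially the same route as the paper's: both arguments invoke Corollary \ref{co2-7} (applicable since $I_{n}$ is symmetric) to reduce $I_{n}\wedge I_{n}$ to $2\,(I_{n}\times I_{n})$, and then carry out the identical entrywise Kronecker-delta contraction $\sum_{j_{1},j_{2}}\delta_{i_{1}i_{2}}\delta_{j_{1}j_{2}}X_{j_{1}j_{2}}=\tr(X)\,\delta_{i_{1}i_{2}}$. The only difference is presentational: the paper writes the contraction directly as $(\J X)_{ij}=2[(I_{n}\times I_{n})X]_{ij}$, while you spell out the convention from (\ref{eq1-7}); the substance is the same, including your (and the paper's) reliance on Corollary \ref{co2-7} as the decisive step.
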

\begin{proof}
We take $A$ to be the identity matrix $I_{n}$ in (\ref{eq2-10}) and denote $\J = I_{n}\wedge I_{n}$, then $\J=2 I_{n}\times I_{n} \in\T_{4;n}$ by 
Corollary \ref{co2-7} and so 
\[  (\J X)_{ij} = 2[(I_{n}\times I_{n})X]_{ij} = 2\delta_{ij}\sum\limits_{k_{1},k_{2}}\delta_{k_{1}k_{2}}X_{k_{1}k_{2}} = 2 \tr(X) \delta_{ij}, \] 
for all $i, j$.  Thus (\ref{eq2-11}) holds. 
\end{proof}
\indent We remark that tensor $\cK_{n}:=I_{n}\times_{(1,3)} I_{n}\in\T_{4;n}$ acts as an \emph{identity map} since $\cK_{n}X=X=X\cK_{n}$ for all $X\in\R^{n\times n}$  
where 
\[ (\A X)_{ij} =\sum\limits_{i^{\p},j^{\p}} A_{iji^{\p}j^{\p}} X_{i^{\p}j^{\p}},  (X\A)_{ij} = \sum\limits_{i^{\p},j^{\p}} X_{i^{\p}j^{\p}} A_{i^{\p}j^{\p}ij}. \]
$\cK_{n}$ is also called the \emph{commutation tensor} in statistics \cite{XHL2020}.  

\indent The following lemma, which is also of interest in itself and will be used to prove the main result in the next section, is to be generalized to Lemma \ref{le3-8}.\\
\begin{lem}\label{le2-10}
Let $\al,\be,\ga\in \R^{n}$ be linearly independent and $n\ge 3$. Then $\al\wedge \be, \be\wedge \ga, \ga\wedge \al$ are also linearly independent in the Grassmann 
algebra.   
\end{lem}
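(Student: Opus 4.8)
The plan is to isolate each coefficient of a putative dependence relation by wedging with one of the three given vectors, using the fact that a triple wedge vanishes precisely when its generating vectors are linearly dependent. First I would suppose that $c_{1}(\al\wedge\be)+c_{2}(\be\wedge\ga)+c_{3}(\ga\wedge\al)=0$ for scalars $c_{1},c_{2},c_{3}\in\R$, and the goal is to deduce $c_{1}=c_{2}=c_{3}=0$. Since the outer product is bilinear, the operation $\A\wedge\bfb$ defined in (\ref{eq2-5}) is linear in its tensor argument $\A$; hence I may wedge this identity on the right by any fixed vector and distribute the scalars across the sum.

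Next I would wedge the relation by $\ga$. By Proposition~\ref{pro2-6} (associativity of the wedge), each term $(\cdot)\wedge\ga$ becomes a genuine triple wedge, so the identity reads $c_{1}(\al\wedge\be\wedge\ga)+c_{2}(\be\wedge\ga\wedge\ga)+c_{3}(\ga\wedge\al\wedge\ga)=0$. The second and third terms each contain a repeated factor $\ga$, hence correspond to linearly dependent triples and vanish by the criterion recorded after (\ref{eq1-10}) (a Grassmann tensor is nonzero iff its generating vectors are independent). As $\al,\be,\ga$ are independent, $\al\wedge\be\wedge\ga\neq 0$, and I conclude $c_{1}=0$. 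By the evident symmetry, wedging the relation by $\al$ annihilates the first and third terms and leaves $c_{2}(\be\wedge\ga\wedge\al)=0$, forcing $c_{2}=0$; wedging by $\be$ leaves $c_{3}(\ga\wedge\al\wedge\be)=0$, forcing $c_{3}=0$. In both cases the surviving triple is a (cyclic, hence even) rearrangement of $\al\wedge\be\wedge\ga$ and is therefore nonzero, so the signs are immaterial.

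This establishes the linear independence. I do not expect a genuine obstacle here: the argument is a direct exterior-algebra computation. The only points requiring care are the two structural facts I would lean on, namely the linearity of $\A\wedge\bfb$ in $\A$ together with the associativity furnished by Proposition~\ref{pro2-6}, which jointly license passing from the wedge of a bivector with a vector to a triple wedge of vectors, as well as the nonvanishing criterion for Grassmann tensors. An alternative, more computational route would instead invoke Theorem~\ref{th2-2} to write the components of each bivector $\al\wedge\be$ as the $2\times 2$ minors $\al_{i}\be_{j}-\al_{j}\be_{i}$ and read off independence from a suitably chosen $3\times 3$ submatrix of the matrix $[\al,\be,\ga]$; but the wedging argument is cleaner and avoids the index bookkeeping.
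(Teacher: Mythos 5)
Your proof is correct and follows essentially the same route as the paper's: both reduce the claim to the fact that a triple wedge $\al\wedge\be\wedge\ga$ vanishes precisely when its generators are linearly dependent, by wedging the putative dependence relation with one more vector so that two of the three terms die. Your version is in fact slightly tidier, since treating the three coefficients symmetrically avoids the implicit normalization (a nonzero coefficient on $\al\wedge\be$) that the paper's contradiction argument tacitly assumes when it writes $\al\wedge\be=\la(\be\wedge\ga)+\mu(\ga\wedge\al)$.
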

\begin{proof}
If $\al\wedge \be, \be\wedge \ga, \ga\wedge \al$ are linearly dependent, then there exist some scalar $\la,\mu\in\R$ such that 
\[  \al\wedge\be = \la(\be\wedge\ga) + \mu(\ga\wedge\al)  \]
which implies that $\al\wedge \be = (\la\be -\mu \al) \wedge \ga$ and thus $\al\wedge\be\wedge\ga =0$ (e.g. \cite{XX2022}). It turns out by \cite{XX2022} that $\al,\be,\ga$ 
must be linearly dependent, a contradiction to the hypothesis.   
\end{proof}

\vskip 5pt

\section{ Applications of Grassmann tensors in multiview geometry and geometry}
\setcounter{equation}{0}

Tensors can be employed to express the correspondences in computer vision. They can also be used to estimate the fundamental matrix, which is crucial in 3D
reconstruction from two-views in the vision.  A fundamental matrix $F$ can be described as the homography transforming a point $\bx$ in an image plane to a line 
$l^{\p} = F\bx$ in another plane, so    
\beq\label{eq3-1}
(\pbx)^{\top}F\bx=0
\eeq 
holds for every corresponding point pair $(\bx,\pbx)$. (\ref{eq3-1}) can alternatively be written in tensor form as 
\beq\label{eq3-2}
F\times_{1} \pbx\times_{2} \bx = 0
\eeq 
where $F$ is viewed as a second-order tensor.  Note that here all points and planes are expressed in homogeneous coordinate system, i.e., $\bx,\pbx\in \R^{3}$.  Another
expression for $F$ is through the two camera matrices. \\
\indent  The Grassmann tensor can be used to simplify some expressions in the geometry. For example, a plane determined by three points at general positions can be 
is usually expressed by a determinant equation as in the following.
\begin{prop}\label{p3-1} 
Let $X_{1},X_{2},X_{3}\in\PP^{3}$ ($X_{k}$ is 4-dimensional) be located in general positions, i.e., they are non-colinear. Then a point $X\in\R^{4}$ lies on the plane determined by $X_{1},X_{2},X_{3}$ if and only if  $\det [X,X_{1},X_{2},X_{3}] = 0$.
\end{prop}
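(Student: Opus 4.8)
The plan is to reduce the geometric assertion to a purely linear-algebraic one and then close with the Grassmann-tensor identity of Corollary~\ref{co2-5}. First I would unpack the projective data. Representing each projective point by a vector in $\R^{4}$, the hypothesis that $X_{1},X_{2},X_{3}$ are non-colinear in $\PP^{3}$ says precisely that their three homogeneous representatives are linearly independent, so they span a $3$-dimensional subspace $W\subset\R^{4}$. The plane they determine is the projectivization of $W$, and therefore a point $X\in\R^{4}$ lies on that plane if and only if $X\in W$, i.e.\ if and only if $X$ is a linear combination of $X_{1},X_{2},X_{3}$.

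Next I would turn membership in $W$ into a statement about the four vectors jointly. Since $X_{1},X_{2},X_{3}$ are already independent, $X\in W$ holds if and only if the enlarged family $\{X,X_{1},X_{2},X_{3}\}$ is linearly dependent: if $X$ lies in $W$ the dependence is immediate, and conversely any nontrivial linear relation among the four must carry a nonzero coefficient on $X$ (otherwise it would contradict the independence of $X_{1},X_{2},X_{3}$), which lets us solve for $X$ as a combination of the other three. To phrase the conclusion in the tensor language of the paper, I would form the Grassmann tensor $\cP[X,X_{1},X_{2},X_{3}]=X\wedge X_{1}\wedge X_{2}\wedge X_{3}\in\T_{4;4}$ and apply Corollary~\ref{co2-5} in the case $m=n=4$, giving
\[
\cP[X,X_{1},X_{2},X_{3}] = \det[X,X_{1},X_{2},X_{3}]\,\cH,
\]
where $\cH$ is the generalized-sign tensor, which is nonzero. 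Hence $\cP[X,X_{1},X_{2},X_{3}]=0$ if and only if $\det[X,X_{1},X_{2},X_{3}]=0$, and since a Grassmann tensor vanishes exactly when its generating vectors are linearly dependent, combining this with the reduction above yields
\[
X\ \mbox{lies on the plane}\iff\{X,X_{1},X_{2},X_{3}\}\ \mbox{dependent}\iff\det[X,X_{1},X_{2},X_{3}]=0,
\]
which is the claim. (Equivalently, one may bypass the tensor step and use directly that four vectors in $\R^{4}$ are linearly dependent precisely when their $4\times 4$ matrix is singular.)

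The computation is entirely routine; the only point I would treat with care is the translation of the projective hypothesis. Specifically, I must justify that ``general position / non-colinear'' for $X_{1},X_{2},X_{3}$ is the same as linear independence of their $\R^{4}$ representatives, and that ``lying on the plane determined by them'' is the same as belonging to the span $W$. Once this dictionary between the projective picture and the linear-algebraic picture is fixed, both the determinant criterion and its Grassmann-tensor reformulation follow immediately, so the dictionary — rather than any calculation — is the genuine content of the proof.
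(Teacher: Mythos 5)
Your argument is correct, but there is nothing in the paper to compare it against: Proposition~\ref{p3-1} is stated without proof, as the classical determinant criterion that Theorem~\ref{th3-2} is then meant to recast in Grassmann-tensor form. Your reduction --- non-colinearity of $X_{1},X_{2},X_{3}$ in $\PP^{3}$ equals linear independence of their representatives, ``lying on the plane'' equals membership in the span $W$, and (given that independence) membership in $W$ equals linear dependence of the four vectors, i.e.\ $\det[X,X_{1},X_{2},X_{3}]=0$ --- is exactly the standard way to fill that gap, and the dictionary between the projective and linear pictures is indeed the only point needing care. The detour through Corollary~\ref{co2-5} is correct but redundant: once you have reduced to ``four vectors in $\R^{4}$ are dependent iff the $4\times4$ determinant vanishes,'' the identity $\cP[X,X_{1},X_{2},X_{3}]=\det[X,X_{1},X_{2},X_{3}]\,\cH$ adds nothing, and in fact the paper's cited equivalence ``$\cP[\bv_{1},\ldots,\bv_{m}]\neq 0$ iff the $\bv_{i}$ are independent'' is itself proved (via Theorem~\ref{th2-2}) from determinants, so leaning on it here is mildly circular. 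The one loose end worth a sentence is the degenerate representative $X=0$, which satisfies the determinant condition but represents no projective point; excluding it (or noting the statement is read projectively) tidies the equivalence.
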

By the Grassmann tensor, however, we can put it in a more concise form, as stated in the following theorem. 
\begin{thm}\label{th3-2}
Let $\al_{1},\al_{2},\al_{3}\in\R^{4}$ be vectors representing respectively planes $\pi_{1},\pi_{2}$ and $\pi_{3}$ located at general positions.  Then these planes meet  
at a unique point $\bx\in\R^{4}$ if and only if  $\A\bx=0$ where $\A=\al_{1}\wedge \al_{2}\wedge \al_{3}\in\T_{3;4}$. 
\end{thm}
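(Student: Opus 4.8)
The plan is to convert the tensor equation $\A\bx=0$ into a family of $3\times 3$ determinant conditions via Theorem \ref{th2-2}, and to match these against the incidence relations $\al_1^\top\bx=0$, $\al_2^\top\bx=0$, $\al_3^\top\bx=0$, which express that $\bx$ lies on each plane $\pi_k$. Set $A=[\al_1,\al_2,\al_3]\in\R^{4\times 3}$, so that by Theorem \ref{th2-2} the tensor $\A=\al_1\wedge\al_2\wedge\al_3$ has entries $A_{i_1i_2i_3}=\det A[i_1,i_2,i_3|\colon]$, the $3\times 3$ minor of $A$ on rows $i_1,i_2,i_3$. Since $\A\bx$ denotes the contraction $\A\times_{(3)}\bx$, it is the matrix in $\R^{4\times 4}$ with $(\A\bx)_{i_1i_2}=\sum_{i_3=1}^{4}A_{i_1i_2i_3}x_{i_3}$. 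My first step is to expand this sum using multilinearity of the determinant in its third row: summing the minors against $x_{i_3}$ replaces the third row by the vector $(\al_1^\top\bx,\al_2^\top\bx,\al_3^\top\bx)$, giving the key identity
\[
(\A\bx)_{i_1i_2}=\left|\begin{array}{ccc}(\al_1)_{i_1} & (\al_2)_{i_1} & (\al_3)_{i_1}\\ (\al_1)_{i_2} & (\al_2)_{i_2} & (\al_3)_{i_2}\\ \al_1^\top\bx & \al_2^\top\bx & \al_3^\top\bx\end{array}\right|,\qquad 1\le i_1,i_2\le 4.
\]

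With this identity the forward direction is immediate: if $\bx$ lies on all three planes then $\al_k^\top\bx=0$ for each $k$, the bottom row vanishes, and every entry of $\A\bx$ is zero. For the converse I write $\bfb:=(\al_1^\top\bx,\al_2^\top\bx,\al_3^\top\bx)^\top\in\R^3$ and must deduce $\bfb=0$ from $\A\bx=0$; this is the step I expect to be the main obstacle. Assume $\A\bx=0$. Then the identity says that for every pair of distinct rows $r_{i_1},r_{i_2}$ of $A$ the three vectors $r_{i_1},r_{i_2},\bfb$ are linearly dependent in $\R^3$. The general-position hypothesis means $\al_1,\al_2,\al_3$ are linearly independent, so $\rank(A)=3$ and three rows of $A$, say $r_1,r_2,r_3$, form a basis of $\R^3$. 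For each pair of basis rows the vanishing determinant, together with the independence of $r_{i_1},r_{i_2}$, forces $\bfb\in\mathrm{span}(r_{i_1},r_{i_2})$; hence $\bfb\in\mathrm{span}(r_1,r_2)\cap\mathrm{span}(r_1,r_3)=\mathrm{span}(r_1)$, and intersecting with $\mathrm{span}(r_2,r_3)$ gives $\bfb\in\mathrm{span}(r_1)\cap\mathrm{span}(r_2,r_3)=\{0\}$, again using independence of $r_1,r_2,r_3$. Thus $\bfb=0$, i.e. $\al_k^\top\bx=0$ for all $k$, so $\bx$ lies on each $\pi_k$.

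Finally, uniqueness comes from the same rank count: the common points of the three planes are exactly the solutions of $A^\top\bx=0$, and since $A^\top\in\R^{3\times 4}$ has rank $3$ its solution space is one-dimensional, which is a single point of $\PP^3$. I expect the derivation of the displayed identity and the forward implication to be routine determinant bookkeeping; the delicate point is the converse, where the general-position hypothesis must be used precisely to collapse the six scalar equations $(\A\bx)_{i_1i_2}=0$ into the three incidence equations $\al_k^\top\bx=0$. An equivalent and perhaps slicker formulation of that step is to observe that each vanishing determinant forces $\bfb$ orthogonal to a normal of the plane $\mathrm{span}(r_{i_1},r_{i_2})$ in $\R^3$, and that the normals arising from a spanning set of rows again span $\R^3$, so $\bfb=0$.
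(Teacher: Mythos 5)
Your proof is correct, and its computational core is in fact the same identity the paper derives, just written entrywise: the paper expands the wedge directly to get $\A\bx=(\al_{3}^{\top}\bx)\,\al_{1}\wedge\al_{2}+(\al_{1}^{\top}\bx)\,\al_{2}\wedge\al_{3}+(\al_{2}^{\top}\bx)\,\al_{3}\wedge\al_{1}$, and your displayed $3\times 3$ determinant with bottom row $(\al_{1}^{\top}\bx,\al_{2}^{\top}\bx,\al_{3}^{\top}\bx)$ is exactly the $(i_{1},i_{2})$ entry of that expression, recovered by cofactor expansion along the bottom row. Where you genuinely diverge is the converse step, which you correctly identify as the only delicate point. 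The paper disposes of it by invoking Lemma \ref{le2-10}: since $\al_{1},\al_{2},\al_{3}$ are linearly independent, the three bivectors $\al_{1}\wedge\al_{2},\al_{2}\wedge\al_{3},\al_{3}\wedge\al_{1}$ are linearly independent, so the three scalar coefficients must vanish. You instead avoid that lemma entirely and argue on the rows of $A=[\al_{1},\al_{2},\al_{3}]$: each vanishing entry forces $\bfb=(\al_{1}^{\top}\bx,\al_{2}^{\top}\bx,\al_{3}^{\top}\bx)^{\top}$ into the span of a pair of rows, and intersecting these spans over a set of three independent rows collapses $\bfb$ to zero. Both arguments are sound. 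The paper's route is the structurally cleaner one and is the template that scales to Lemma \ref{le3-8} and Theorem \ref{th3-9} for general $m$; yours is more elementary and self-contained (it needs only Theorem \ref{th2-2} and basic linear algebra, not the independence of the pairwise wedges), and it has the side benefit of making the entries of $\A\bx$ explicit as bordered minors. Your closing remark on uniqueness via $\mathrm{rank}(A^{\top})=3$ is also correct and is something the paper leaves implicit.
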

\begin{proof}
We let $\bx\in\R^{4}$ be the intersection point of the three planes $\pi_{1}, \pi_{2}$ and $\pi_{3}$. By definition, we have 
\beq\label{eqt3-3}
\al_{i}^{\top}\bx =0, \quad i=1,2,3. 
\eeq
Then by definition we have 
\beyy
 \A\bx  &=& (\al_{1}\wedge \al_{2}\wedge \al_{3})\bx \\
           &=& (\al_{1}\times \al_{2}\times \al_{3} + \al_{2}\times \al_{3}\times \al_{1} +\al_{3}\times \al_{1}\times \al_{2} \\
           &  & -\al_{2}\times \al_{1}\times \al_{3} - \al_{3}\times \al_{2}\times \al_{1}  - \al_{1}\times \al_{3}\times \al_{2})\bx \\
           &=&(\al_{3}^{\top}\bx)\al_{1}\wedge\al_{2} + (\al_{1}^{\top}\bx)\al_{2}\wedge\al_{3} +(\al_{2}^{\top}\bx)\al_{3}\wedge\al_{1}               
\eeyy
So we have 
\beq\label{eqt3-4}
\A\bx = (\al_{3}^{\top}\bx)\al_{1}\wedge\al_{2} + (\al_{1}^{\top}\bx)\al_{2}\wedge\al_{3} +(\al_{2}^{\top}\bx)\al_{3}\wedge\al_{1} 
\eeq
Since $\al_{1},\al_{2},\al_{3}$ are linearly independent, we know that $\al_{1}\wedge \al_{2}, \al_{2}\wedge \al_{3}, \al_{3}\wedge \al_{1}$ are also linearly independent
by Lemma \ref{le2-10}.  It follows from (\ref{eqt3-4}) that $\A\bx=0$ is equivalent to (\ref{eqt3-3}). 
\end{proof}
\indent By the symmetry, we can deduce the following result which is analog to Theorem \ref{th3-2}.
\begin{cor}\label{co3-3} 
Let $\al_{1},\al_{2},\al_{3}\in\R^{4}$ be vectors representing three points in $\R^{3}$ at general positions.  Then they uniquely determine a plane $\pi\in\R^{4}$. Furthermore, 
a point $\bx\in\R^{4}$ lies on $\pi$ if and only if  $\bx$ satisfies condition $\A\bx=0$ where $\A=\al_{1}\wedge \al_{2}\wedge \al_{3}\in\T_{3;4}$. 
\end{cor}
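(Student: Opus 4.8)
The plan is to obtain Corollary \ref{co3-3} as the point-plane dual of Theorem \ref{th3-2}, reusing the contraction computed there, since $\A=\al_1\wedge\al_2\wedge\al_3$ is literally the same wedge tensor. First I would fix the incidence convention of $\PP^{3}$: a point $\bx\in\R^{4}$ is incident with the plane of coefficient vector $\bfb\in\R^{4}$ precisely when $\bfb^{\top}\bx=0$. Since $\al_1,\al_2,\al_3$ sit in general position, their homogeneous coordinates are linearly independent, so they span a unique plane $\pi$; its coefficient vector $\bfb$ is the solution, unique up to scale, of $\al_i^{\top}\bfb=0$ for $i=1,2,3$, and this vector is the $\pi\in\R^{4}$ named in the statement.

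Next I would expand the contraction $\A\bx$ exactly as in the proof of Theorem \ref{th3-2}. Because the wedge tensor is identical, the same six-term sum collapses to
\beq
\A\bx=(\al_3^{\top}\bx)(\al_1\wedge\al_2)+(\al_1^{\top}\bx)(\al_2\wedge\al_3)+(\al_2^{\top}\bx)(\al_3\wedge\al_1).
\eeq
By Lemma \ref{le2-10} the bivectors $\al_1\wedge\al_2$, $\al_2\wedge\al_3$, $\al_3\wedge\al_1$ are linearly independent (the $\al_i$ being independent), so $\A\bx=0$ forces each scalar coefficient to vanish. Hence $\A\bx=0$ is equivalent to the linear system $\al_1^{\top}\bx=\al_2^{\top}\bx=\al_3^{\top}\bx=0$, the same reduction that drove Theorem \ref{th3-2}.

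The decisive step, and the one I expect to be the main obstacle, is to read this system as the incidence claim ``$\bx$ lies on $\pi$''. The roles have been swapped relative to Theorem \ref{th3-2}: there the $\al_i$ were planes and $\al_i^{\top}\bx=0$ literally placed the point $\bx$ on each plane, whereas here the $\al_i$ are points, so $\al_i^{\top}\bx=0$ instead says that the plane carried by $\bx$ passes through each $\al_i$. Thus the solution set of $\A\bx=0$ is the one-dimensional span of the coefficient vector $\bfb$ of $\pi$, rather than the three-dimensional locus $\set{\bx\colon \bfb^{\top}\bx=0}$ of points actually lying on $\pi$. To honour the literal wording I would therefore pass through the point-plane duality of $\PP^{3}$, identifying the solution $\bx$ of $\A\bx=0$ with $\pi$ itself via its coefficient vector, so that the vanishing of every $\al_i^{\top}\bx$ records the incidence of the three generating points with the plane $\bx$. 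Making this identification precise, so that ``lies on $\pi$'' is justified rather than the weaker ``$\bx$ represents $\pi$'', is exactly the delicate point the symmetry with Theorem \ref{th3-2} must supply.
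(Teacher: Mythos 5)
Your proposal is correct and follows the same route as the paper: the paper's entire proof of this corollary is the single sentence ``By the symmetry, we can deduce the following result,'' i.e.\ point--plane duality applied to Theorem \ref{th3-2}, and your expansion of $\A\bx$ together with the appeal to Lemma \ref{le2-10} is exactly that symmetry argument written out in full.

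The ``delicate point'' you flag at the end is not a gap in your proof but a defect in the paper's statement, and your reading of it is the right one. A dimension count makes this unambiguous: since $\al_{1},\al_{2},\al_{3}$ are linearly independent, the solution set of $\A\bx=0$ is $\mathrm{span}(\al_{1},\al_{2},\al_{3})^{\perp}$, which is one-dimensional and is spanned by the coefficient vector $\bfb$ of $\pi$; whereas the homogeneous coordinate vectors of the points lying on $\pi$ fill the three-dimensional subspace $\mathrm{span}(\al_{1},\al_{2},\al_{3})$ itself. These two sets meet only in $0$, so under the literal reading of ``lies on $\pi$'' \emph{neither} implication of the corollary holds for nonzero vectors. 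What your argument actually establishes, and what the corollary should say, is: a nonzero $\bx\in\R^{4}$ satisfies $\A\bx=0$ if and only if $\bx$ is a coordinate vector of the plane $\pi$ determined by the three points, i.e.\ $\bx$ represents $\pi$ in the dual space, equivalently $\al_{i}^{\top}\bx=0$ for $i=1,2,3$. No appeal to symmetry can upgrade ``represents $\pi$'' to ``lies on $\pi$''; the honest resolution is to restate the corollary, exactly as your final paragraph proposes, rather than to pretend the identification is automatic.
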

\indent By Corollary \ref{co3-3}, we call $\A=\al_{1}\wedge \al_{2}\wedge \al_{3}$ the \emph{Grassmann tensor} or \emph{Pl{\"u}cker tensor} associated with plane $\pi$ 
which is determined by points $\al_{1},\al_{2},\al_{3}\in\R^{4}$. This is actually the extension of the \emph{Pl{\"u}cker matrix} of a line, as explained in the follows. \\
\indent  Let $L(X,Y)$ denote the line determined by  two distinct points $X,Y\in\R^{4}$ which are homogeneous represented.  The \emph{Pl{\"u}cker matrix} associated with 
$X,Y$, denoted by $P:=P[X,Y]$, is defined as 
\beq\label{eq:Pluckline1}
P:=X\wedge Y=X\times Y -Y\times X =XY^{\top} -YX^{\top}
\eeq
$P[X,Y]\in\R^{4\times 4}$ is a rank-2 anti-symmetric matrix. Different pairs $(X,Y)$ may give the same Pl{\"u}cker matrix $P$. But we do have 
\begin{thm}\label{th3-5}
Let $(X,Y)$ and $(X^{\p}, Y^{\p})$ be two point pairs with $X,Y,X^{\p},Y^{\p}\in \R^{4}$. Then $P[X,Y]=\la P[X^{\p},Y^{\p}]$ for some nonzero scalar $\la$ if and only if there is a nonsingular matrix 
$Q=(q_{ij})\in\R^{2\times 2}$ such that 
\beq\label{eq3-5}
\left\{ \begin{array}{ccc} X^{\p} &=& q_{11}X +q_{12}Y,\\ Y^{\p} &=& q_{21}X +q_{22}Y \end{array}\right.   
\eeq
\end{thm}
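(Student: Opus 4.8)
The plan is to prove both implications, treating $(X,Y)$ and $(X^{\p},Y^{\p})$ as genuine point pairs, so that $X,Y$ and $X^{\p},Y^{\p}$ are each linearly independent and the Pl\"ucker matrices $P[X,Y]$ and $P[X^{\p},Y^{\p}]$ are both of rank $2$. (This restriction is essentially forced: since $\la\neq 0$, one Pl\"ucker matrix vanishes exactly when the other does, and a Pl\"ucker matrix vanishes iff its two generating points are linearly dependent.) The sufficiency direction is a direct computation, while the necessity direction rests on the observation that a Pl\"ucker matrix remembers the plane spanned by its two points as its column space.

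For the sufficiency direction I would substitute (\ref{eq3-5}) into the definition $P[X^{\p},Y^{\p}] = X^{\p}(Y^{\p})^{\top} - Y^{\p}(X^{\p})^{\top}$ and expand. The rank-one terms in $XX^{\top}$ and $YY^{\top}$ cancel, and collecting the mixed terms $XY^{\top}$ and $YX^{\top}$ produces the common coefficient $q_{11}q_{22}-q_{12}q_{21}=\det(Q)$, so that $P[X^{\p},Y^{\p}] = \det(Q)\,P[X,Y]$. Since $Q$ is nonsingular, $\det(Q)\neq 0$, and taking $\la = 1/\det(Q)$ gives $P[X,Y]=\la P[X^{\p},Y^{\p}]$.

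For the necessity direction the key step is a small lemma identifying the image of a Pl\"ucker matrix. For any $\bz\in\R^{4}$ one has $P[X,Y]\bz = (Y^{\top}\bz)\,X - (X^{\top}\bz)\,Y$; because $X,Y$ are linearly independent, the functionals $X^{\top}$ and $Y^{\top}$ are linearly independent, so the pair $(Y^{\top}\bz,\,X^{\top}\bz)$ sweeps out all of $\R^{2}$ as $\bz$ varies, and hence the column space of $P[X,Y]$ is exactly the span of $\set{X,Y}$. The same identification holds for $P[X^{\p},Y^{\p}]$ with span $\set{X^{\p},Y^{\p}}$. Now if $P[X,Y]=\la P[X^{\p},Y^{\p}]$ with $\la\neq 0$, the two matrices are scalar multiples and therefore have identical column spaces, so the span of $\set{X,Y}$ equals the span of $\set{X^{\p},Y^{\p}}$. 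Consequently $X^{\p}$ and $Y^{\p}$ lie in the plane spanned by $X$ and $Y$, which yields scalars $q_{ij}$ satisfying (\ref{eq3-5}); the matrix $Q=(q_{ij})$ is precisely the change-of-basis matrix between the two bases $\set{X,Y}$ and $\set{X^{\p},Y^{\p}}$ of this common two-dimensional space, hence nonsingular.

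The main obstacle I anticipate is not any single computation but the bookkeeping around degeneracy: one must argue that the hypotheses legitimately restrict attention to linearly independent pairs (since if both pairs were dependent, both Pl\"ucker matrices would vanish and a nonsingular $Q$ need not exist), and one must verify cleanly that the image of $P[X,Y]$ is the \emph{full} two-dimensional span rather than something smaller, which is exactly where the linear independence of $X$ and $Y$ is genuinely used. A secondary, automatic check is consistency of the scalars, namely that the $Q$ recovered in the necessity direction reproduces $\la=1/\det(Q)$ through the sufficiency computation.
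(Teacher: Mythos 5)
Your proof is correct and takes essentially the same route as the paper: sufficiency by expanding $P[X^{\p},Y^{\p}]$ to extract the factor $\det(Q)$ (the paper packages the same computation as $P[X^{\p},Y^{\p}]=M_{1}QE_{2}Q^{\top}M_{1}^{\top}=\det(Q)P[X,Y]$ with $M_{1}=[X,Y]$), and necessity by showing that the Pl\"ucker matrix determines the plane spanned by $X,Y$ --- you recover this plane as the column space of $P[X,Y]$, while the paper uses the equivalent dual statement that $N(P[X,Y])=N(M_{1}^{\top})$ before passing to the change-of-basis matrix $Q$. Your explicit handling of the degenerate case (both pairs linearly dependent, where the stated equivalence would actually fail) is a slightly more careful version of the paper's own reduction to linearly independent pairs.
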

\begin{proof}
We let $M=[X,Y,X^{\p},Y^{\p}]$. Then $M\in \R^{4\times 4}$. We want to prove that $\rank(M)=2$. We write $M_{1}=[X,Y]$ and $M_{2}=[X^{\p},Y^{\p}]$, then we have 
$M_{k}\in \R^{4\times 2}$. If $X,Y$ are linearly dependent, then $P[X,Y]=0$, thus we may assume that $X,Y$ (and $X^{\p}, Y^{\p}$) are linearly independent. So 
$\rank(M_{k})=2$ for $k=1,2$.\\  
\indent Denote by $N(A)$ the null space of a given matrix $A$. We now show that  
\bey\label{eq3-6}
N(P[X,Y]) &=N(M_{1}^{\top})\label{eq3-6-1}  \\
N(P[X^{\p},Y^{\p}])&=N(M_{2}^{\top})\label{eq3-6-2} 
\eey 
\indent  To prove (\ref{eq3-6-1}), we let $Z\in N(P[X,Y])$. Then 
\[ 0=P[X,Y]Z = (XY^{\top}-YX^{\top})Z = (Y^{\top}Z) X - (X^{\top}Z) Y, \] 
which implies $X^{\p} Z = 0$ and  $Y^{\p} Z =0$, and thus $Z\in N(M^{\top})$. It follows that $N(P[X,Y])\subseteq N(M_{1}^{\top})$. Conversely, we can show 
$N(M_{1}^{\top})\subseteq N(P[X,Y])$ by reversing the argument.  Thus (\ref{eq3-6-1}) holds.  Similarly we can show (\ref{eq3-6-2}).  \\
\indent To show the sufficiency of (\ref{eq3-5}), we first note that 
\beq\label{eq3-7}
P[X,Y]=M_{1}DM_{1}^{\top}, P[X^{\p},Y^{\p}]=M_{2}DM_{2}^{\top}
\eeq
where $D=E_{2}$ is defined by (\ref{eq2-4}). If (\ref{eq3-5}) holds, i.e., $M_{2}=M_{1}Q$ with $Q\in\R^{2\times 2}$ nonsingular, then we have 
\beq\label{eq3-8}  
P[X^{\p},Y^{\p}] = M_{2}DM_{2}^{\top} = M_{1}QDQ^{\top}M_{1}^{\top} = M_{1}P[q_{1},q_{2}] M_{1}^{\top} 
\eeq
where $Q=[q_{1},q_{2}]$ with $q_{k}\in \R^{2}$($k=1,2$). We note that $q_{1},q_{2}$ are linearly independent due to the nonsingularity of $Q$, and thus $P[q_{1},q_{2}]$
is not zero. Furthermore, we have $P[q_{1},q_{2}]=\det(Q) D$ by simple computations. Consequently we get by (\ref{eq3-8}) 
\[  P[X^{\p},Y^{\p}] = \det(Q) M_{1}DM_{1}^{\top} =\det(Q) P[X,Y]. \]     
\indent To prove the necessity, we assume w.l.g. that $\la=1$ and $P[X,Y]=P[X^{\p},Y^{\p}]$. By (\ref{eq3-6-1}) and (\ref{eq3-6-2}), we have 
$N(M_{1}^{\top})=N(M_{2}^{\top})$. Thus 
\[ N(M^{\top})=N(M_{1}^{\top})\cap N(M_{2}^{\top}) = N(M_{1}^{\top}), \]
which implies $\rank(M)=\rank(M^{\top}) = \rank(M_{1}^{\p})=2$. Since $X,Y$ are linearly independent, there exists a matrix $Q=(q_{ij})\in\R^{2\times 2}$ such that  
$M_{2}=M_{1}Q$. Similarly we have $M_{1}=M_{2}Q^{\p}$ for some $Q^{\p}=(q^{\p}_{ij})\in\R^{2\times 2}$ by swapping $(X,Y)$ with $(X^{\p},Y^{\p})$. It follows that 
$Q$ (and also $Q^{\p}$) is invertible (nonsingular). The proof is completed. 
\end{proof}

\indent Theorem \ref{th3-5} allows us to choose an invertible matrix $Q\in\R^{2\times 2}$ such that $\set{X^{\p},Y^{\p}}$ is orthognormal, i.e. , 
\beq\label{eq3-9}
\seq{X^{\p},X^{\p}} = \seq{Y^{\p},Y^{\p}} =1, \quad \seq{X^{\p},Y^{\p}} = 0
\eeq
with $P[X,Y]=P[X^{\p},Y^{\p}]$.  In fact, we can use the Schmidt orthognormal  process to obtain $X^{\p},Y^{\p}$: 
\[
\begin{array}{lll} 
 X^{\p}  &=\la_{1} X,                          &\la_{1}:=\frac{1}{\norm{X}},\\
 \bar{Y} &=Y - \seq{X^{\p},Y} X^{\p}, & \\
  Y^{\p} &=\la_{2} \bar{Y},                 &\la_{2}:=\frac{1}{\norm{\bar{Y}}} 
\end{array}   
\] 
\indent We now show the following theorem.
\begin{thm}\label{th3-6}
Let $X,Y\in \R^{4}$ be orthognormal. Then the Pl{\"u}cker matrix $P=P[X,Y]$ is the reflection with respect to $Y$.
\end{thm}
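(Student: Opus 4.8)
The plan is to treat $P=P[X,Y]=XY^{\top}-YX^{\top}$ as a linear operator on $\R^{4}$, determine its action on an arbitrary vector $Z\in\R^{4}$, and then recognize the resulting map geometrically. First I would invoke the orthonormality hypotheses $\seq{X,X}=\seq{Y,Y}=1$ and $\seq{X,Y}=0$ furnished by (\ref{eq3-9}), and split $\R^{4}=V\oplus V^{\perp}$ with $V=\mathrm{span}\{X,Y\}$. Writing $Z=aX+bY+W$ where $W\in V^{\perp}$, a direct application of the definition gives $PZ=(Y^{\top}Z)X-(X^{\top}Z)Y$, which after using orthonormality collapses to $PZ=bX-aY$. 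In particular $PX=-Y$, $PY=X$, and $PW=0$ for every $W\perp X,Y$, so the whole behaviour of $P$ is captured by its restriction to the two-dimensional plane $V$.

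The second step is to pin down that restriction. In the orthonormal basis $\{X,Y\}$ the computation above shows $P|_{V}$ has matrix $E_{2}$ of (\ref{eq2-4}), in agreement with the two-dimensional identity $\cP[\al,\be]=\det[\al,\be]E_{2}$ of Corollary \ref{co2-6} applied to $\al=X,\be=Y$ (where $\det[X,Y]=\pm1$). Thus $P$ acts as $E_{2}$ on $V$ and annihilates $V^{\perp}$, and one checks the square satisfies $P^{2}=-(XX^{\top}+YY^{\top})$, i.e. $P^{2}=-\mathrm{Id}$ on $V$ and $P^{2}=0$ on $V^{\perp}$. I would record these two facts, the explicit action $X\mapsto-Y$, $Y\mapsto X$ together with $P^{2}|_{V}=-\mathrm{Id}$, as the algebraic content underlying the theorem.

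The final step, and the one I expect to be the real obstacle, is to match this operator to the stated geometric object, the reflection with respect to $Y$. Since $P$ is antisymmetric with $\det P=0$, it is not a Euclidean reflection of $\R^{4}$ in the literal sense, so the identification must be made in the ambient setting appropriate to Section 3. The natural reading, consistent with the homogeneous-coordinate viewpoint used throughout, is to regard $P$ as inducing the map $[a:b]\mapsto[b:-a]$ on the projective line $L(X,Y)$ spanned by $X$ and $Y$; this map is an involution of $\PP^{1}$, since applying it twice returns $[-a:-b]=[a:b]$, and it interchanges the two generating points $[X]\oto[Y]$. I would make this identification precise, specifying exactly the space on which $P$ reflects and the role played by $Y$, and then verify the involution property there. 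Reconciling the rotational action of $E_{2}$ on the Euclidean plane $V$ with the word ``reflection'' is the delicate point of the argument, and the proof succeeds or fails on stating that identification unambiguously.
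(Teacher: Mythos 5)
Your algebra is correct and is essentially the paper's own computation, carried a little further. The paper's proof consists of exactly the two identities you derive, $PX=-Y$ and $PY=X$ (obtained from $PZ=(Y^{\top}Z)X-(X^{\top}Z)Y$ together with orthonormality), followed by a diagonalization over $\CC$: setting $\al=X+\imath Y$ and $\be=X-\imath Y$, the paper records $P\al=\imath\al$ and $P\be=-\imath\be$, i.e. $P$ has eigenvalues $\pm\imath$ on $V=\mathrm{span}\{X,Y\}$ --- which is just another packaging of your observation that $P|_{V}=E_{2}$ and $P^{2}|_{V}=-\mathrm{Id}$. Your additional steps (the splitting $\R^{4}=V\oplus V^{\perp}$ with $P|_{V^{\perp}}=0$, and $P^{2}=-(XX^{\top}+YY^{\top})$) are correct and only sharpen the picture.

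The obstacle you isolate at the end is genuine, and you should know that the paper does not overcome it: its proof stops after exhibiting the eigenvalues $\pm\imath$ and never says in what sense a map with $PY=X\neq Y$ and $P^{2}|_{V}=-\mathrm{Id}$ is ``the reflection with respect to $Y$''. A Euclidean reflection is an involution fixing its mirror and has eigenvalues $\pm 1$; $P|_{V}$ is a quarter-turn with $\det(E_{2})=1$. Even your proposed projective reading $[a:b]\mapsto[b:-a]$, while an involution of $\PP^{1}$, has no real fixed points and sends $[Y]$ to $[X]$, so it is not a reflection ``with respect to $Y$'' in any standard sense either. In short, the delicate point is a defect of the theorem's wording, not of your argument, and there is no further idea in the paper's proof available to close it. The defensible statement is the one you actually proved: $P$ annihilates $V^{\perp}$ and acts on $V$ as the rotation $E_{2}$ in the basis $\{X,Y\}$; equivalently, $-P^{2}=XX^{\top}+YY^{\top}$ is the orthogonal projection onto $V$.
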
  
\begin{proof}
We first assume that $X,Y\in\R^{n}$ are orthognormal vectors, i.e., 
\[ \norm{X}=\norm{Y}=1, \seq{X,Y}=X^{\top}Y=0, \]
where the norm is $2$-norm.  Then we have 
\[ PX = (XY^{\top}-YX^{\top})X = -Y, \quad  PY = (XY^{\top}-YX^{\top})Y = X, \]
Denote $\al=X+\imath Y, \be = X -\imath Y$ where $\imath:=\sqrt{-1}$. Then $\al, \be$ are orthognormal (thus also linearly independent).  Furthermore, we can 
check easily that 
\[ P[\al,\be] = [\al, \be] K, \]
where $K=\diag(\imath, -\imath)$, i.e., $P$ maps $\al$ to $\al^{\p}=\imath \al$ and $\be$ to $\be^{\p}= -\imath \be$. 
\end{proof}

\indent  The Pl{\"u}cker matrix $P=\bfa\bfb^{\top} - \bfb \bfa^{\top}$ can be used to represent the intersection line $l$ of two planes(see e.g. \cite{HS2009}).
\begin{prop}\label{p3-7}
Let $\bfa,\bfb\in\R^{4}$ be linearly independent vectors representing two distinct planes $\pi_{1}$ and $\pi_{2}$ respectively, and $l$ be their intersection line, i.e., 
\[ l: \left\{ \begin{array}{cl}  \pi_{1}: & \bfa^{\top}\bx =0, \\  \pi_{2}: & \bfb^{\top}\bx =0.\end{array} \right. \]
Then $P=\bfa\bfb^{\top} - \bfb \bfa^{\top}$ is the Pl{\"u}cker matrix of $l$, i.e., $\bx\in l$ if and only if $P\bx=0$. Furthermore, If $Q\in\R^{4\times 4}$ is also a Pl{\"u}cker matrix of $l$, then $Q=\la P$ for some nonzero scalar $\la\in\R$.   
\end{prop}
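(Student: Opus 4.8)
The plan is to treat the two assertions separately: the equivalence $\bx\in l\iff P\bx=0$ by a one-line expansion, and the uniqueness by the same $2\times 2$ determinant identity that drives Theorem \ref{th3-5}.

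For the equivalence, I would expand $P\bx=(\bfa\bfb^{\top}-\bfb\bfa^{\top})\bx=(\bfb^{\top}\bx)\bfa-(\bfa^{\top}\bx)\bfb$. If $\bx\in l$ then $\bfa^{\top}\bx=\bfb^{\top}\bx=0$, so both scalar coefficients vanish and $P\bx=0$. Conversely, $P\bx=0$ gives $(\bfb^{\top}\bx)\bfa=(\bfa^{\top}\bx)\bfb$, and since $\bfa,\bfb$ are linearly independent this forces $\bfa^{\top}\bx=\bfb^{\top}\bx=0$, i.e. $\bx\in l$. This computation also records two facts I will reuse: $N(P)=l$ is $2$-dimensional, so $\rank(P)=2$, and as $\bx$ ranges over $\R^{4}$ the coefficient pair $(\bfb^{\top}\bx,-\bfa^{\top}\bx)$ sweeps out all of $\R^{2}$, whence the column space of $P$ is exactly $\mathrm{span}\,\set{\bfa,\bfb}=l^{\perp}$.

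For uniqueness, the key is that any Pl\"{u}cker matrix $Q$ of $l$ is a rank-$2$ anti-symmetric matrix with $N(Q)=l$. Since $Q$ is anti-symmetric, its column space equals $N(Q)^{\perp}=l^{\perp}=\mathrm{span}\,\set{\bfa,\bfb}$, the same plane as for $P$. Every such rank-$2$ skew matrix factors as $Q=\bfa^{\p}\wedge\bfb^{\p}=[\bfa^{\p},\bfb^{\p}]\,E_{2}\,[\bfa^{\p},\bfb^{\p}]^{\top}$ for a basis $\bfa^{\p},\bfb^{\p}$ of its column space, exactly as in the decomposition (\ref{eq3-7}). Because $\set{\bfa^{\p},\bfb^{\p}}$ and $\set{\bfa,\bfb}$ span the same plane, I can write $[\bfa^{\p},\bfb^{\p}]=[\bfa,\bfb]R$ with $R\in\R^{2\times 2}$ nonsingular, giving $Q=[\bfa,\bfb]\,(R\,E_{2}\,R^{\top})\,[\bfa,\bfb]^{\top}$. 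The routine identity $R\,E_{2}\,R^{\top}=\det(R)\,E_{2}$ (already invoked inside the proof of Theorem \ref{th3-5} as $P[q_{1},q_{2}]=\det(Q)E_{2}$) then collapses this to $Q=\det(R)\,[\bfa,\bfb]\,E_{2}\,[\bfa,\bfb]^{\top}=\det(R)\,P$, so $\la=\det(R)\neq 0$.

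The genuinely load-bearing step is the column-space identity $N(Q)^{\perp}=\mathrm{span}\,\set{\bfa,\bfb}$ together with the factorization of a rank-$2$ skew matrix as $\bfa^{\p}\wedge\bfb^{\p}$; once those are in hand, the determinant identity forces the scalar $\la$. I expect the main obstacle to be pinning down the working definition of a ``Pl\"{u}cker matrix of $l$'' — namely a rank-$2$ anti-symmetric $Q$ with $N(Q)=l$ — and checking that this class is precisely the set of matrices $\bfa^{\p}\wedge\bfb^{\p}$ arising from pairs of planes meeting in $l$. After that, the argument mirrors Theorem \ref{th3-5} verbatim, with points replaced by planes.
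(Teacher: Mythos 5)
The paper states Proposition \ref{p3-7} without its own proof (it moves directly to Lemma \ref{le3-8}), so there is no in-paper argument to compare against line by line; judged on its own merits, your proposal is correct. The equivalence $\bx\in l\iff P\bx=0$ via $P\bx=(\bfb^{\top}\bx)\bfa-(\bfa^{\top}\bx)\bfb$ and the linear independence of $\bfa,\bfb$ is exactly the computation the paper performs inside the proof of Theorem \ref{th3-5} to establish $N(P[X,Y])=N(M_{1}^{\top})$, and your uniqueness argument is the sufficiency half of Theorem \ref{th3-5} — the factorization $Q=[\bfa^{\p},\bfb^{\p}]E_{2}[\bfa^{\p},\bfb^{\p}]^{\top}$ combined with $RE_{2}R^{\top}=\det(R)E_{2}$ — preceded by the correct observation that any plane containing $l$ is represented by a vector in $l^{\perp}=\mathrm{span}\set{\bfa,\bfb}$, so that $[\bfa^{\p},\bfb^{\p}]=[\bfa,\bfb]R$ with $R$ nonsingular. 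The one step you assert rather than prove is that every rank-$2$ anti-symmetric matrix factors as $\bfa^{\p}\wedge\bfb^{\p}$; this is a standard normal-form fact for skew-symmetric matrices (restrict $Q$ to $N(Q)^{\perp}$, which it preserves since its range equals $N(Q^{\top})^{\perp}=N(Q)^{\perp}$), and it is not even needed if one reads ``Pl\"{u}cker matrix of $l$'' in the paper's own sense of (\ref{eq:Pluckline1}), namely a matrix $P[\bfa^{\p},\bfb^{\p}]$ built from some pair of planes cutting out $l$ — under that reading the factorization is given and your argument closes immediately. In substance this is the same route the paper takes for the point-pair version in Theorem \ref{th3-5}, specialized to planes.
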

\indent 
In order to generalize Proposition \ref{p3-7}, we first need the following lemma which is an extension of Lemma \ref{le2-10}. 
\begin{lem}\label{le3-8}
 Let vectors $\bfa^{1},\bfa^{2},\ldots, \bfa^{m}\in \R^{n}$ be linearly independent with $1< m\in [n]$.  Then the $(m-1)$-vectors in set 
\beq\label{eq3-10}
\Gam_{m-1}:=\set{\bfa^{i_{1}}\wedge \bfa^{i_{2}}\wedge\cdots\wedge\bfa^{i_{m-1}}\colon \set{i_{1},i_{2},\ldots,i_{m-1}} \subset [m].  }
\eeq  
are linearly independent.
\end{lem}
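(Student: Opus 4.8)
The plan is to note that $\Gam_{m-1}$ has exactly $m$ members, one for each index $k\in[m]$ that is deleted from $[m]$. Write
\[ \om_{k}:=\bfa^{1}\wedge\cdots\wedge\bfa^{k-1}\wedge\bfa^{k+1}\wedge\cdots\wedge\bfa^{m} \]
for the wedge of all the $\bfa^{i}$ with $i\neq k$, so that $\Gam_{m-1}=\set{\om_{1},\ldots,\om_{m}}$; the goal is to show these $m$ Grassmann tensors are linearly independent.

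First I would assume a vanishing linear combination $\sum_{k=1}^{m}c_{k}\om_{k}=0$ and isolate each coefficient by the device used for Lemma \ref{le2-10}: wedge the identity on the right by a single vector $\bfa^{j}$ through the product $\A\wedge\bfb$ of (\ref{eq2-5}). For $k\neq j$ the vector $\bfa^{j}$ is already one of the factors of $\om_{k}$ (only $\bfa^{k}$ was deleted), so $\om_{k}\wedge\bfa^{j}$ carries a repeated vector and vanishes by anti-symmetry. For $k=j$ we instead get $\om_{j}\wedge\bfa^{j}=\pm\,\bfa^{1}\wedge\cdots\wedge\bfa^{m}$, which is nonzero precisely because $\bfa^{1},\ldots,\bfa^{m}$ are linearly independent \cite{XX2022}. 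Hence wedging by $\bfa^{j}$ leaves only the single surviving term $c_{j}\bigl(\pm\,\bfa^{1}\wedge\cdots\wedge\bfa^{m}\bigr)=0$, forcing $c_{j}=0$. Letting $j$ range over $[m]$ yields $c_{1}=\cdots=c_{m}=0$, which is the desired independence.

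The step that needs the most care, and which I expect to be the main obstacle, is justifying the two wedging identities above, namely that appending one more vector via (\ref{eq2-5}) agrees with the full antisymmetrization, so that $\bigl(\bfa^{1}\wedge\cdots\wedge\bfa^{m-1}\bigr)\wedge\bfa^{m}=\bfa^{1}\wedge\cdots\wedge\bfa^{m}$ and, in particular, a repeated factor makes the product vanish. This is the $m$-fold extension of Proposition \ref{pro2-6}, which settles the three-vector case. I would prove it by induction on the number of factors, expanding $\A\wedge\bfa^{j}$ through (\ref{eq2-5}) and matching the resulting signed outer products against the expansion of the full wedge $\caL$; the associativity (\ref{eq1-3}) and the alternating property of $\caL$ (any outer product with two equal vector factors lies in the kernel of $\caL$) are what make the signs and the bookkeeping close up.

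Alternatively, I could bypass this bookkeeping by a change of basis. Since $\bfa^{1},\ldots,\bfa^{m}$ are linearly independent, there is an invertible $T\in\R^{n\times n}$ with $T\bfa^{j}=\bfe_{j}$ for $j\in[m]$; as $T$ acts on each mode and commutes with $\caL$, the induced map on Grassmann tensors of order $m-1$ is again invertible and hence preserves linear independence. This reduces the claim to the case $\bfa^{j}=\bfe_{j}$, where each $\om_{k}$ is a distinct standard basis $(m-1)$-vector and the independence is immediate. Of the two routes I expect the wedge-and-annihilate argument to give the cleaner write-up, with the extension of Proposition \ref{pro2-6} being the only genuine work.
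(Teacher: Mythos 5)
Your proposal is correct and follows essentially the same route as the paper: the paper also writes the members of $\Gam_{m-1}$ as $\al^{(k)}=\bigwedge_{j\neq k}\bfa^{j}$, assumes a vanishing linear combination, and wedges with a suitable $\bfa^{j}$ so that every term with a repeated factor dies while the surviving term equals the nonzero full wedge $\bfa^{1}\wedge\cdots\wedge\bfa^{m}$. The only cosmetic difference is that the paper isolates the last nonzero coefficient and derives a contradiction, whereas you wedge with each $\bfa^{j}$ in turn; your remark that the identity $(\bfa^{1}\wedge\cdots\wedge\bfa^{m-1})\wedge\bfa^{m}=\bfa^{1}\wedge\cdots\wedge\bfa^{m}$ (the $m$-fold extension of Proposition \ref{pro2-6}) deserves explicit justification is a fair point that the paper itself passes over by citing \cite{XX2022}.
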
  
\begin{proof}
The result is obvious for $m=1,2$, and it is also true for $m=3$ by Lemma \ref{le2-10}.  Now we assume that $m>3$ ($m\le n$). Denote 
\beq\label{eq3-11} 
\al^{(k)}=\bigwedge\limits_{j\neq k, j=1}^{m} \bfa^{j}, \quad  k=1,2,\ldots,m. 
\eeq
Then $\Gamma_{m-1}=\set{\al^{(1)}, \al^{(2)}, \cdots,\al^{(m)}}$.  Suppose there are some $\la_{1},\ldots, \la_{m}\in\R$, not all zeros, such that 
\beq\label{eq3-12} 
\la_{1}\al^{(1)} + \la_{2}\al^{(2)} + \ldots +\la_{m}\al^{(m)} =0. 
\eeq
We may assume that the last nonzero scalar in $\set{\la_{k}}$ is $\la_{m^{\p}}$ with $1< m^{\p} \le m$, i.e., 
$0\neq \la_{m^{\p}}$, and $\la_{m^{\p}+1} = \la_{m^{\p}+2} = \cdots = \la_{m} =0$ (if $m^{\p}=m$, then $\la_{m}\neq 0$).  For simplicity, we may assume that $m^{\p}=m$.
Then (\ref{eq3-12}) is equivalent to 
\beq\label{eq3-13}
\al^{(m)} = \sum\limits_{k=1}^{m-1} \la^{\p}_{k}\al^{(k)}, \quad \la^{\p}_{k}=\frac{\la_{k}}{\la_{m}}, k\in [m-1].
\eeq
By (\ref{eq3-13}), we have 
\beq\label{eq3-15}
\al^{(m)}\wedge \bfa^{m} = \sum\limits_{k=1}^{m-1} \la^{\p}_{k} (\al^{k}\wedge\bfa^{m}) = 0.
\eeq
On the other hand, we have 
\[  \al^{(m)}\wedge \bfa^{m}=\bfa^{1}\wedge \bfa^{2}\wedge\cdots\wedge\bfa^{m} \]
which is nonzero due to the linear independency of $\set{\bfa^{k}\colon k\in [m]}$  (see e.g. \cite{XX2022}), a contradiction to (\ref{eq3-15}). 
Thus we have $\la_{1}=\cdots =\la_{m}=0$, and so result holds.
\end{proof}  
\indent To state our last result, we recall that a polytope in $\R^{d}$ generated by a set of points $X:=\set{\bx^{1},\bx^{2},\cdots, \bx^{m}}\subset \R^{d}$ is defined 
as the set of the affine combinations of the points in $X$, i.e., 
\[
W=\conv(X):=\set{\sum\limits_{j=1}^{m} \la_{j}\bx^{j}\colon \sum\limits_{j=1}^{m} \la_{j}=1,  \la^{j}\ge 0\  \forall j\in [m]. }
\]
$W$ can also be represented as the bounded solution set of a finite system of linear inequalities, i.e., 
\beq\label{eq3-16} 
W =W(A, b):=\set{\bx\in\R^{d}:  A^{\top}\bx \le \bfb }  
\eeq
Here $A=[\bfa^{1}, \bfa^{2},\cdots,\bfa^{m}]\in\R^{d\times m}, \bfb\in \R^{m}$.  \\

\indent  We end the paper by the following theorem in which the Grassmann tensor is used to describe a polytope in geometry.  

\begin{thm}\label{th3-9}
 Let $\hat{W}$ be the surface (bounder) of the polytope $W$ defined by (\ref{eq3-16}) where $A=[\bfa^{1}, \bfa^{2},\cdots,\bfa^{m}]\in\R^{d\times m}, \bfb\in \R^{m}$ with 
 $\rank(A)=r >1$. Then $\bx\in \hat{W}$ if and only if  
 \beq\label{eq3-17} 
 (\bfa^{i_{1}}\wedge\bfa^{i_{2}}\wedge\cdots\wedge\bfa^{i_{r}})\bx = 0
 \eeq
 where $\set{\bfa^{i_{1}}, \bfa^{i_{2}}, \cdots,\bfa^{i_{r}}}$ is a basis of the set of vectors $X=\set{\bfa^{1}, \bfa^{2},\ldots,\bfa^{m}}$.   
\end{thm}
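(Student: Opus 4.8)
The plan is to reduce the tensor equation $\A\bx=0$ to the homogeneous linear system $A^{\top}\bx=0$, by first expanding the contraction $\A\bx$ as a signed combination of $(r-1)$-fold wedges and then invoking the independence result of Lemma~\ref{le3-8}, after which the geometric statement about $\hat W$ follows from the two representations of the polytope.

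First I would establish the contraction formula generalizing the computation in Theorem~\ref{th3-2}. Writing $\A=\bfa^{i_{1}}\wedge\cdots\wedge\bfa^{i_{r}}=\caL(\bfa^{i_{1}}\times\cdots\times\bfa^{i_{r}})$ and contracting the last mode against $\bx$, I would group the permutations appearing in $\caL$ according to which factor is sent to the contracted slot: each such factor contributes the scalar $(\bfa^{i_{k}})^{\top}\bx$, while the remaining signed sum antisymmetrizes the other $r-1$ vectors. This yields
\begin{equation}
\A\bx=\sum_{k=1}^{r}(-1)^{k-1}\bigl((\bfa^{i_{k}})^{\top}\bx\bigr)\,\bfa^{i_{1}}\wedge\cdots\wedge\widehat{\bfa^{i_{k}}}\wedge\cdots\wedge\bfa^{i_{r}},
\end{equation}
where $\widehat{\bfa^{i_{k}}}$ denotes omission of that factor; the exact sign is immaterial for the conclusion.

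Next, since $\set{\bfa^{i_{1}},\ldots,\bfa^{i_{r}}}$ is a basis of $X$ it is in particular linearly independent, so Lemma~\ref{le3-8} (applied with its ambient count equal to $r$, which is admissible because $r=\rank(A)>1$) guarantees that the $r$ coefficient wedges $\bfa^{i_{1}}\wedge\cdots\wedge\widehat{\bfa^{i_{k}}}\wedge\cdots\wedge\bfa^{i_{r}}$, $k\in[r]$, are linearly independent in the Grassmann algebra. Consequently $\A\bx=0$ forces every scalar coefficient to vanish, i.e. $(\bfa^{i_{k}})^{\top}\bx=0$ for all $k$. Because the chosen vectors form a basis of $X=\set{\bfa^{1},\ldots,\bfa^{m}}$, every $\bfa^{j}$ is a linear combination of them, so this condition is equivalent to $(\bfa^{j})^{\top}\bx=0$ for all $j\in[m]$, that is $A^{\top}\bx=0$; the reverse implication is immediate from the expansion. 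This settles the algebraic equivalence $\A\bx=0\Longleftrightarrow A^{\top}\bx=0$.

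The remaining step, which I expect to be the main obstacle, is to identify the solution set $\set{\bx\colon A^{\top}\bx=0}$ with the surface $\hat W$ via the passage between the vertex and the inequality representations of $W$. The delicate point is that the condition $\A\bx=0$ is independent of $\bfb$ and pins $\bx$ to the common kernel of \emph{all} the facet normals simultaneously, whereas a boundary point of a general polyhedron need only make \emph{some} constraints active; so the proof must make precise the convention under which $\hat W$ is described (e.g. a homogeneous/normalized form of (\ref{eq3-16}) in which the bounding hyperplanes share the relevant reference point, so that a surface point is exactly one annihilated by the full normal system). Once this geometric identification is fixed, the theorem follows by combining it with the equivalence $\A\bx=0\Longleftrightarrow A^{\top}\bx=0$ established above.
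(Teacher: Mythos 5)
Your algebraic core is exactly the paper's argument: the paper likewise reduces to the case $r=m$, expands the contraction as $\cP\bx=\sum_{k=1}^{m}(-1)^{m-k}(\bx^{\top}\bfa^{k})\al^{k}$ with $\al^{k}$ the wedge of the remaining $m-1$ vectors, and invokes Lemma~\ref{le3-8} to conclude that all coefficients $(\bfa^{k})^{\top}\bx$ vanish, hence $A^{\top}\bx=0$. Your additional remark that a basis of $X$ suffices (since every $\bfa^{j}$ is a combination of the $\bfa^{i_{k}}$) is the same observation the paper uses to justify its reduction to $r=m$. So up to the equivalence $\A\bx=0\Longleftrightarrow A^{\top}\bx=0$ you have reproduced the intended proof.

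The step you flag as the ``main obstacle'' is indeed the weak point, and you should know that the paper does not resolve it either: its proof ends with the bare assertion that $A^{\top}\bx=0$ ``is equivalent to \ldots\ $\bx$ is on the surface of the polytope $W$.'' That identification is not correct for the object defined in (\ref{eq3-16}) with a general $\bfb$: the set $\set{\bx\colon A^{\top}\bx=0}$ is the common kernel of all the facet normals (a linear subspace through the origin, of dimension $d-r$), whereas a boundary point of $W=\set{\bx\colon A^{\top}\bx\le\bfb}$ need only satisfy \emph{some} constraint with equality, and when $\bfb\neq 0$ the tight constraints read $(\bfa^{k})^{\top}\bx=b_{k}$, not $(\bfa^{k})^{\top}\bx=0$. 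Moreover the statement $\A\bx=0$ makes no reference to $\bfb$ at all, so it cannot characterize $\hat{W}$ for arbitrary $\bfb$. Your instinct that the theorem only holds under an additional normalization (e.g.\ $\bfb=0$ together with a convention making all constraints active on the ``surface,'' or a homogeneous reformulation in which the bounding hyperplanes pass through a common reference point) is right; as stated, neither your proposal nor the paper's proof closes this gap, and the gap lies in the theorem's formulation rather than in the wedge-product algebra.
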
 
\begin{proof}
We may assume without loss of generality that $r=m$ since otherwise we can replace $A$ by its submatrix $A_{1}\in\R^{d\times r}$ whose column vectors form a basis 
of $X:=\set{\bfa^{1}, \bfa^{2},\cdots,\bfa^{m}}$. Denote $\cP:=\bfa^{1}\wedge\bfa^{2}\wedge\cdots\wedge\bfa^{m}$ and $\al^{k}$ be defined by (\ref{eq3-11}). Then      
\beyy
0=\cP\bx  &= (\bfa^{1}\wedge\bfa^{2}\wedge\cdots\wedge\bfa^{m}) \bx \\
            &=\sum\limits_{k=1}^{m} (-1)^{m-k}(\bx^{\top}\bfa^{k})\al^{k}     
\eeyy 
Since $X$ is a set of linearly independent vectors, $\al^{1},\al^{2},\ldots, \al^{m}$ are also linearly independent by Lemma \ref{le3-8}. Thus we have 
$\bx^{\top}\bfa^{k}=(\bfa^{k})^{\top}\bx =0$ for all $k\in [m]$, which is equivalent to $A^{\top}\bx =0$, that is, $\bx$ is on the surface of the polytope $W$. 
The converse can be proved by reversing the arguments.       
\end{proof}

\vskip 5pt

\section*{References}

\end{document}